\def\eq{equation}
\def\ea{eqnarray}
\def\tk{\widetilde{K(r)}}
\def\ep{\varepsilon}
\def\vphi{\varphi}
\def\Si{\Sigma}
\def\s{\sigma}
\def\o{\omega}
\def\S*{\Sigma_*}
\def\o{\omega}
\def\r{\rho}
\def\<{\langle}
\def\>{\rangle}
\def\j{R|J|}
\def\E{\mathbb{E}}
\def\N{\mathbb{N}}
\def\P{\mathbb{P}}
\def\R{\mathbb{R}}
\def\rd{\mathbb{R}^d}
\def\T{\mathcal{T}}
\def\H{\mathcal{H}}
\def\S{\mathcal{S}}
\def\PR{\mathcal{PR}}
\def\1{\mathbf{1}}
\newcommand{\diam}{\operatorname{diam}}
\newcommand{\esup}{\operatorname{ess\,sup}}
\newcommand{\einf}{\operatorname{ess\,inf}}
\newcommand{\elim}{\operatorname{ess\,lim}}
\newcommand{\id}{\operatorname{id}}
\newcommand{\Sim}{\operatorname{Sim}}
\newcommand{\Lip}{\operatorname{Lip}}
\newcommand{\nor}{\operatorname{nor}}
\newcommand{\Int}{\operatorname{int}}
\newcommand{\rea}{\operatorname{reach}}
\def\tit{\textit}
\theoremstyle{plain}
   \newtheorem{thm}{Theorem}[section]
   \newtheorem{thms}{Theorem}[subsection]
   \newtheorem{cors}[thms]{Corollary}
   \newtheorem{lems}[thms]{Lemma}
   \newtheorem{conds}[thms]{Condition}
\theoremstyle{remark}
\theoremstyle{definition}
   \newtheorem{defs}[thms]{Definition}
   \newtheorem{rems}[thms]{Remark}
   \newtheorem*{remarks}{Remarks}
\begin{document}
\title{Lipschitz-Killing curvatures of self-similar random fractals}
\author{M. Z\"ahle}
\address{University of Jena \\ Mathematical Institute}
\email{martina.zaehle@uni-jena.de}

\subjclass[2000]{Primary 28A80, 60D05; Secondary 28A75, 28A78, 53C65, 60J80, 60J85}

\date{}

\dedicatory{Dedicated to Herbert Federer}

\keywords{self-similar random fractals, curvatures, Minkowski content, branching random walks}

\begin{abstract}
For a large class of self-similar random sets $F$ in $\rd$ geometric parameters $C_k(F)$,  $k=0,\ldots,d$, are introduced. They arise as a.s. (average or essential) limits of the volume $C_d(F(\ep))$, the surface area $C_{d-1}(F(\ep))$ and the integrals of general mean curvatures over the unit normal bundles $C_k(F(\ep))$  of the parallel sets $F(\ep)$ of distance $\ep$ rescaled by $\ep^{D-k}$ as $\ep\rightarrow 0$. Here $D$ equals the a.s. Hausdorff dimension of $F$. The corresponding results for the expectations are also proved.
\end{abstract}

\maketitle
\section{Introduction}
\label{intro}
Self-similar sets in Euclidean space $\rd$ in the sense of Hutchinson \cite{Hu81} have intensively been studied in fractal analysis and geometry. Their probabilistic counterparts, the so-called (stochastically) self-similar random sets, were introduced independently by
Falconer \cite{Fa86}, Mauldin and Williams \cite{MW86} (1986), and Graf \cite{Gr87} (1987). Concerning the geometry of these random sets the literature up to now deals mainly with exact Hausdorff dimensions, multifractal spectra and associated measures. \\
In the present paper we will introduce a system of $d+1$ (random) geometric parameters which allow to distinguish between self-similar random sets $F$ with equal Hausdorff dimension $D$, but different geometric and topological features. This continues the work of Winter \cite{Wi08} who first investigated such parameters for deterministic self-similar sets with the open set condition and polyconvex neighborhoods.\\
Here we use the construction model from \cite{MW86} for such random $F$ satisfying the open set condition for a fixed deterministic open set with closure $J$. Our parameters are in close relationship to Federer's curvature measures for sets of positive reach \cite{Fe59}. For this we assume additionally that with probability 1 for Lebesgue almost all $r>0$ the parallel set $F(r)$ of amount $r$ has a Lipschitz boundary and the closure of its complement $\widetilde{F(r)}$ has positive reach. (We conjecture that this is already guaranteed by the strong open set condition on $\Int(J)$. At least this can be checked for many examples which do not possess polyconvex parallel sets. For $d\le 3$ it follows from a general result of Fu \cite{Fu85}). In this case the random sets $F(r)$ are Lipschitz $d$-manifolds of bounded curvature in the sense of \cite{RZ05} and their Lipschitz-Killing curvatures are determined by those of Federer:
$$C_k(F(r))=(-1)^{d-1-k}\, C_k\big(\widetilde{F(r)}\big)~,~~k=0,\ldots,d-2\, .$$
They are completed by the volume $C_d(F(r))$ and the surface area $C_{d-1}(F(r))$ without the additional assumption. The equalities for the total curvatures are localized to (signed) curvature measures. Our main result (see Theorem \ref{maintheorem} and Corollary \ref{averageconv}) is the following: Under an additional assumption on the model (only if $k\le d-2$) the limit
$$\lim_{\delta\rightarrow 0}\frac{1}{|\ln\delta|}\int_\delta^1\ep^{D-k}\E\big(C_k(F(\ep))\big)~\ep^{-1}d\ep $$
and the almost sure limit
$$\lim_{\delta\rightarrow 0}\frac{1}{|\ln\delta|}\int_\delta^1\ep^{D-k} C_k(F(\ep))~ \ep^{-1} d\ep$$
exist. Moreover, the first limit has an explicit integral representation and the second one is a random multiple of the first one. The multiplier is the inverse of a martingale limit related to the contraction ratios of the generating system of random similarities, i.e. it does not depend on $k$. If the logarithmic contraction ratios are non-lattice in a sense, then the average limits may be replaced by the ordinary limits as $\ep\rightarrow 0$. In general, the limits define random fractal Lipschitz-Killing curvatures of $F$, if they do not equal zero. For vanishing limits the correctness of the scaling exponents $D-k$ has to be checked in order to give such a curvature interpretation.\\
For the special case $k=d$ this concerns the (average) Minkowski content, and the results were proved by Gatzouras (see \cite{Ga00} and the references therein to related work).
We adopt his idea to apply the classical renewal theorem for the expectations and a renewal theorem of Nerman \cite{Ne81} for branching random walks in order to derive the almost sure limits.\\
At the same time we get an extension of Winter's results for the total curvatures in the deterministic case. However, Winter also proved such limit relationships for the corresponding curvature measures. The fractal versions in his case are all constant multiples of $D$-dimensional Hausdorff measure on $F$. It will be shown later that this remains valid under our conditions. In order to introduce random fractal curvature measures additional work is needed, in particular, related to exact Hausdorff measures of such random sets in the sense of Graf, Mauldin and Williams \cite{GMW88}.\\
The paper is organized as follows.\\
In Section 1 we summarize some background from classical singular curvature theory. The Appendix provides an auxiliary new result result in this direction - an estimate for the Lipschitz-Killing curvatures of sufficiently large parallel set of an arbitrary compact subset of $\rd$, which is related to well-known isodiametric inequalities in the convex case.\\
Section 2.1 provides the random iterated function scheme and the limit set $F$ by means of random trees and an associated branching random walk.\\
In Section 2.2 we follow Gatzouras \cite{Ga00} and present a slight extension of the corresponding renewal theorem for branching random walks.\\
Its application to fractal curvatures is prepared in Section 2.3, which contains the formulation of the main results.\\
The key proofs are given in Section 3 using the above mentioned theorem from the Appendix.

\section{Background from classical singular curvature theory - curvature measures of parallel sets}\label{sec:classcurv}

The geometry of classical geometric sets in $\rd$ may be described by certain parameters forming complete systems of Euclidean invariants in the following sense: It is well known from convex geometry (Hadwiger's characterization theorem) that every set-additive, continuous and motion invariant functional on the space of convex bodies is a linear combination of Minkowski's quermass integrals. The latter are also called intrinsic volumes. For smooth boundaries they agree - except the volume - with the integrals of the elementary symmetric functions of principal curvatures, the so-called integrals of (higher order) mean curvatures of smooth manifolds. Federer \cite{Fe59} unified and extended the intrinsic volumes and the integrals of mean curvatures with tools of geometric measure theory: He introduced curvature measures of sets with positive reach by means of a Steiner polynomial for the volume of parallel set. (This is related to Weyl's tube formula in differential geometry, where the mean curvatures arise as traces of certain powers of the Riemannian curvature tensor.) Explicit representations of Federer's curvature measures in form of integrating Lipschitz-Killing curvature forms or symmetric functions of generalized principal curvatures over the associated unit normal bundle were given in \cite{Za86b}. Starting from the 1980s (and earlier in convex geometry) up to now additive extensions and other versions of these curvature measures for classes of singular sets in $\rd$ have been studied with methods of geometric measure theory and algebraic geometry. Nowadays the notion {\it Lipschitz-Killing curvature measures} is used.\\
Localizations of Hadwiger's characterization theorem to curvature measures of convex sets where given by Schneider \cite{Sch78}. In \cite{Za90} this was used  together with an appropriate notion of continuity as approximation tool in order to generalize these characterizations to large classes of singular sets. At the same time this served as a motivation to study such curvature properties also for fractals.\\
The first results in this direction were obtained by Winter \cite{Wi06}, \cite{Wi08}, who worked out essential tools for investigating self-similar fractal sets in $\rd$ under this point of view. Under the usual open set condition and the additional assumption of polyconvex neighborhoods he solved the corresponding problems completely. In view of the Gauss-Bonnet theorem the notion of fractal Euler number investigated before in Llorente and Winter \cite{LW07} under some additional assumption may be considered as a special case.\\
In order to avoid the condition of polyconvex parallel sets and to extend such curvatures to random fractals we suggest now another approach.
For certain classes of compact sets $K\subset\rd$ (including many classical singular sets) it turns out that for Lebesgue-almost all distances $r>0$ the parallel set
\begin{\eq}\label{parallelset}
K(r):=\{x\in\rd: d(x,K)=\min_{y\in K}|x-y|\le r\}
\end{\eq}
possesses the property that the closure of its complement
\begin{\eq}\label{closcompl}
\widetilde{K(r)}:= \overline{K(r)^c}
\end{\eq}
is a set of positive reach with Lipschitz boundary. A sufficient condition is that $r$ is a regular value of the Euclidean distance function to $K$ (see Fu \cite[Theorem 4.1]{Fu85} together with \cite[Proposition 3]{RZ03}). In this case
both the sets $\widetilde{K(r)}$ and $K(r)$ are {\it Lipschitz d-manifolds of bounded curvature} in the sense of \cite{RZ05}, i.e., their {\it k-th Lipschitz-Killing curvature measures}, $k=0,1,\ldots,d-1$, are determined in this general context and agree with the classical versions in the special cases.
Moreover, they satisfy
\begin{\eq}
C_k(K(r),\cdot)=(-1)^{d-1-k}C_k\big(\tk,\cdot\big)\, .
\end{\eq}
Therefore the $C_k(K(r),\cdot)$ are signed measures with finite {\it variation measures}\\
$C_k^{var}(K(r),\cdot)$ and the explicit integral representations are reduced to \cite{Za86b} (cf. \cite[Theorem 3]{RZ05} for the general case). In the present paper the normal cycle representation is needed only in the Appendix. We will briefly mention the normal bundle and the current construction at the end of the section. (The reader not familiar with the corresponding geometric integration theory may skip the current theoretical part.) Here we will list the main properties of the curvature measures for the parallel sets as above which will be used repeatedly:\\
$C_{d-1}(K(r),\cdot)$ agrees with $(d-1)$-{\it dimensional Hausdorff measure} $\H^{d-1}$ on the boundary $\partial K(r)$. The latter is a bounded measure for all $r>0$ and all compact sets $K$ to which it is applied below. Therefore we use this notation in any case. Furthermore, for completeness we define
$C_d(K(r),\cdot)$ as {\it Lebesgue measure restricted to} $K(r)$. The {\it total measures (curvatures)} of $K(r)$ are denoted by
\begin{\eq}
C_k(K(r)):=C_k(K(r),\rd)\, ,~~ k=0,\ldots ,d\, .
\end{\eq}
By an associated Gauss-Bonnet theorem (see \cite[Theorems 2,3]{RZ03}) the {\it Gauss curvature}
$C_0(K(r))$ coincides with the {\it Euler-Poincar\'{e} characteristic} $\chi(K(r))$.\\
The curvature measures are {\it motion invariant}, i.e.,
\begin{\eq}
C_k(g(K(r)),g(\cdot))=C_k(K(r),\cdot)~~\mbox{for any Euclidean motion}~g\, ,
\end{\eq}
they are {\it homogeneous of degree} $k$, i.e.,
\begin{\eq}
C_k(\lambda K(r),\lambda (\cdot))=\lambda^k\, C_k(K(r),\cdot)\, ,~~\lambda>0\, ,
\end{\eq}
and locally determined, i.e.,
\begin{\eq}
C_k(K(r),(\cdot)\cap G)=C_k(K'(r'),(\cdot)\cap G)
\end{\eq}
for any open set $G\subset\rd$ such that $K(r)\cap G=K'(r')\cap G$, where $K(r)$ and $K'(r')$ are
both parallel sets where the closures of the complements have positive reach.\\

We now summarize some facts about sets with positive reach needed below:\\
Recall that $\rea X$ of a set $X\subset \rd$ is defined as the supremum over all $s>0$ such
that every point $x$ in the $s$-parallel set of $X$ there is a unique point $\Pi_Xx\in X$ nearest to $x$. The mapping $\Pi_X$ (on its domain) is called the metric projection onto $X$. For a set X of positive reach the {\it unit normal bundle} is defined as
$$\nor X:=\{(x,n)\in \rd\times S^{d-1}: x\in X ,\, n\in \operatorname{Nor}(X,x)\}$$
where $\operatorname{Nor}(X,x)$ is the dual cone to the (convex) tangent cone of $X$ at $x$.\\
If additionally $\nor X\cap\r(\nor X)=\emptyset$ for the {\it normal reflection} $\r\: (x,n)\mapsto (x,-n)$, then $X$ is full dimensional with Lipschitz boundary (see \cite[Proposition 3]{RZ03}).\\
For general $X$ with $\rea X>0$ there is an associated  rectifiable current called the unit normal cycle of $X$ which is given by
$$N_X(\vphi):=\int_{\nor X}\<a_X(x,n),\vphi(x,n)\>\H^{d-1}(d(x,n))$$
for an appropriate unit simple $(d-1)$-vector field $a_X=a_1\wedge\ldots\wedge a_{d-1}$ associated a.e. with the tangent spaces of $\nor X$ and for integrable differential $(d-1)$-forms $\vphi$. In these terms for $k\le d-1$ the curvature measure may be represented by
$$C_k(X,B)=N_X\llcorner \1_{B\times\rd}(\vphi_k)=\int_{\nor X\cap (B\times\rd)}\<a_X(x,n),\vphi_k(n)\>\H^{d-1}(d(x,n))$$
for any bounded Borel set $B\subset\rd$, where the $k$-th Lipschitz-Killing curvature form $\vphi_k$ does not depend on the points $x$ and is defined by its action on a simple $(d-1)$-vector $\eta=\eta_1\wedge\ldots\wedge\eta_{d-1}$ as follows: Let $\pi_0(y,z):=y$ and $\pi_1(y,z):=z$ be the coordinate projections in $\rd\times\rd$, $e'_1,\ldots,e'_d$ be the dual basis of the standard basis in $\rd$ and $\mathcal{O}_k$ the surface area of the $k$-dimensional unit sphere. Then we have
$$\<\eta,\vphi_k(n)\>:=\mathcal{O}_{d-k}^{-1}\sum_{\ep_i\in\{0,1\},\, \sum\ep_i=d-1-k}\<\pi_{\ep_1}\eta_1\wedge\ldots\wedge\pi_{\ep_{d-1}}\eta_{d-1}\wedge n,e'_1\wedge\ldots\wedge e'_d\>\, .
$$

\section{Self-similar random fractals - the model and statement of the main results}
\label{sec:ssrfractals}
\subsection{Random recursive constructions and associated branching random walks}\label{ssec:recconstr-branwalks}
We briefly describe the random recursive construction model introduced in Mauldin and Williams \cite{MW86} (and independently in Falconer \cite{Fa86} and Graf \cite{Gr87} with different methods). Additionally we use some ideas from Gatzouras \cite{Ga00} for relationships of associated random functions to branching random walks.
Let $\Sim$ be the set of contracting similarities and $J$ a nonempty compact subset of of $\rd$ with $J=\overline{\Int(J)}$. Our basic object is a random element
  $\S:=\{S_1,\ldots,S_\nu\}$ with $S_i\in\Sim$ if $\nu>0$, and
 $\S:=\{\id\}$ if $\nu=0$, where $\nu$ is a random variable with values in $\N_0:=\{0\}\cup\N$. We suppose that $\S$ satisfies the \tit{open set condition} (briefly (OSC)) with respect to $\Int(J)$:
%
\begin{equation}\label{OSC}
\bigcup_{i=1}^\nu S_i(\Int(J))\subset \Int(J)\quad\mbox{and}\quad
S_i(\Int(J))\cap S_j(\Int(J))=\emptyset~, i\ne j\, ,
\end{equation}
with probability 1. Denote the corresponding probability space by $[\Omega^0,\mathfrak{F}^0,P^0]$. $\S$ is also called \tit{random iterated function system}, briefly RIFS.
The corresponding random fractal set is introduced by means of the \tit{code space} $\Si :=\N^\N$ and a \tit{random Galton-Watson tree} in the set of all finite sequences $\Si_*:=\{ 0\}\cup\bigcup_{n=1}^\infty \N^n$:\\
For $\s=\s_1\ldots\s_k\, ,\tau=\tau_1\dots\tau_l\in\Si_*$ we write $|\s|:=k$ for the \tit{length} of $\s$, $\s|i:=\s_1\ldots\s_i$, i<k, for the \tit{restriction} to the first $i$ components, and $\s\tau:=\s_1\ldots\s_k\tau_1\dots\tau_l$ for the \tit{concatenation} of $\s$ and $\tau$. (We will use analogous notations for infinite $\s\in\Si$, resp. $\tau\in\Si$.) By convention, $0\s=\s$.\\
For each $\s\in\Si_*$ let $[\Omega_\s,\mathfrak{F}_\s,\P_\s]$ be a copy of the above probability space. The basic probability space for the random construction model is the product space
\begin{equation}\label{basicprobspace}
[\Omega,\mathfrak{F},\P]:=\bigotimes_{\s\in \Si_*}[\Omega_\s,\mathfrak{F}_\s,\P_\s]\, .
\end{equation}
On this space a family of independent identically distributed RIFS\\
$\{\S_\s=\{S_{\s 1},\ldots,S_{\s\nu_\s}\}\}_{\s\in\Si_*}$ (where $\S_\s=\{\id\}$ if $\nu_\s=0$)
 with i.i.d. random numbers $\{\nu_\s\}_{\s\in\Si_*}$ is then determined. Let $\mathfrak{F}_n$ be the $\s$-algebra generated by all $\S_\s$ and $\nu_\s$ with $|\s|\le n$. For brevity we write
\begin{eqnarray*}
\bar{S}_\s & := & S_{\s|1}\circ S_{\s|2}\circ\cdots\circ S_{\s||\s|}\, ,\\
r_\s & := & \Lip (S_\s)\, ,\\
\bar{r}_\s & := & \Lip(\bar{S}_\s)=r_{\s|1}r_{\s|2}\cdots r_{\s||\s|}\, ,\\
J_\s & := & \bar{S}_\s J\, ,\\
K_\s & := & K\cap J_\s\, ,\\
K^\s & := & \bar{S}_\s^{-1}(K)\cap J\, ,\\
|K| & := & \diam(K)\, ,
\end{eqnarray*}
$\s\in\Si_*$, for any compact set $K$. Note that $K_\s$ is a random compact subset of $J$ in the sense of stochastic geometry (measurable with respect to the Borel $\s$-algebra given by the Hausdorff distance).\\
Set $\T_0:=\{0\}$ and define inductively $\T_{n+1}:=\emptyset$, if $\T_n=\emptyset$, and
$$\T_{n+1}:= \{\s i:\, \s\in\T_n,\, \nu_\s\ne0,\, 1\le i\le\nu_\s\}\, ,$$
if $\T_n\ne\emptyset$. Then
$$\T:=\bigcup_{n=0}^\infty \T_n$$
is the \tit{population tree of a random Galton-Watson process}. $\T_n$ represents the family of individuals in the $n$-th generation with ancestor $0$. The \tit{boundary} of $\T$ is defined by
$$\partial \T:=\{\s\in\Si : \s|n\in\T, n\in\N\}\, .$$
In the sequel we consider the \tit{supercritical case} with
\begin{equation}
1<\mathbb{E}\nu<\infty\, ,
\end{equation}
where this boundary is nonempty with positive probability (see the classical literature on branching processes). The random compact set
%
\begin{equation}
\label{SSRS}
F:=\bigcap_{n=1}^\infty\bigcup_{\s\in\T_n} J_\s
\end{equation}
is the associated \tit{self-similar random set}. $F$ is the image of the boundary $\partial\T$ under the random \tit{projection}
$$\pi: \s\mapsto\lim_{n\rightarrow\infty}\bar{S}_{\s|n}(x_0)$$
for an arbitrary starting point $x_0\in\rd$. By construction
the random set $F$ is non-empty with positive probability. Its {\it stochastic self-similarity property} reads here as follows:
%
\begin{equation}\label{statselfsim}
F=\bigcup_{S_i\in \S}S_i(F^i)
\end{equation}
where the random sets $F^i,\, i\in\N_0$, are independent, have the same distribution as $F$, and the random element $\S=\{S_1,\ldots,S_\nu\}$ with contracting similarities $S_i$, if $\nu>0$, and
$S=\{\id\}$, if $\nu=0$, is as above and independent of the $F^i$. \\
It is well-known that with probability 1 (briefly w.p.1) the \tit{Hausdorff dimension} $D$ of the self-similar random set $F$ is uniquely determined by the equation
%
\begin{equation}\label{Hausdorffdim}
\E\left(\sum_{i\in\T_1} r_i^D\right) = 1\, .
\end{equation}
Under the additional assumption that $\P(F\cap \Int(J)\ne\emptyset)>0$, the so-called \tit{strong open set condition} for the open set $\Int(J)$ from OSC \eqref{OSC}, this has been proved in \cite{MW86}, \cite{Fa86} and \cite{Gr87}. Following Patzschke \cite{Pa97} it remains true supposing only OSC.\\
We are interested in curvature properties of the random fractal set $F$ . The main tool will be approximation by parallel neighborhoods of small distances using the Lipschitz-Killing curvature measures from Section 1 and suitable rescalings. (For the case of deterministic similarities $(S_1,\ldots,S_N)$ and the assumption of polyconvex neighborhoods for the deterministic self-similar set $F$ the corresponding notions and results have been worked out in the Thesis of Winter, see \cite{Wi08}.)
The related problem for the Minkowski content solved in \cite{Ga00} may be considered as a marginal case. An important tool will be again the \tit{branching random walk} $\{W_\s\}_{\s\in\Si_*}$ defined by the recursive formula
%
\begin{equation}\label{branwalk}
W_\s=W_{\s|(|\s|-1)}+\ln r_\s^{-1}\, ,
\end{equation}
if $\s\in\T$, and $W_\s:=\infty$, if $\s\in\Si_*\setminus\T$.
(Recall that $r_\s$ denotes the \tit{contraction ratio} of the similarity $S_\s$ for $\s\in\T$, and by convention $r_0=1$.) In particular, $W_0=0$. As in \cite{Ga00} the classical renewal theorem essentially used for the above problems in the deterministic case will be replaced by an associated stochastic version.
\subsection{Renewal theorem for branching random walks}\label{ssec:renew-branwalk}
We refer to Gatzouras \cite[Section 3.2]{Ga00}.
First consider the associated nonnegative martingale
%
\begin{equation}\label{martingale}
M_n:=\sum_{\s\in\T_n} e^{-DW_\s}= \sum_{\s\in\T_n}\bar{r}_\s^D\, ,\quad  n\ge 0\, ,
\end{equation}
with respect to the filtration $\{\mathfrak{F}_n\}_{n\ge 0}$. (Note that by \eqref{Hausdorffdim} $\E M_1=1$.) According to the martingale convergence theorem the limit
%
\begin{equation}\label{eq:mart-conv}
M_\infty:=\lim_{n\rightarrow\infty} M_n
\end{equation}
exists w.p.1. The next theorem is due to Biggins \cite{Bi77}; see Lyons \cite{Ly97} for a conceptual proof using Lyons, Pemantle and Peres \cite{LPP95}.\\

\begin{thms}[Biggins]\label{Biggins}
The following are equivalent:
\begin{enumerate}
\item [{\rm (i)}] $\E(M_1\ln^+M_1)<\infty$
\item [{\rm (ii)}]$\P(M_\infty=0)<1$
\item [{\rm (iii)}]$\P(M_\infty>0\, |\,\mbox{non-extinction})=1$
\item [{\rm (vi)}]$\E M_\infty=1$
\item [{\rm (v)}]$M_n\rightarrow M_\infty~\mbox{in}~L^1$\, .
\end{enumerate}
\end{thms}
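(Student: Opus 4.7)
First I would dispose of the routine equivalences. Since $(M_n)$ is a nonnegative $\P$-martingale with $\E M_0 = 1$, $L^1$-convergence is equivalent to uniform integrability, which is equivalent to $\E M_\infty = 1$; this settles $(iv)\Leftrightarrow(v)$, and $\E M_\infty = 1$ trivially implies $\P(M_\infty>0) > 0$, giving $(iv)\Rightarrow(ii)$. For $(ii)\Leftrightarrow(iii)$ the self-similarity \eqref{statselfsim} applied at generation one yields
\[
M_\infty \stackrel{d}{=} \sum_{i=1}^\nu r_i^D\, M_\infty^{(i)},
\]
with $M_\infty^{(i)}$ i.i.d.\ copies of $M_\infty$ independent of $(\nu, r_1, \ldots, r_\nu)$. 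Setting $q := \P(M_\infty=0)$ and conditioning on $\mathfrak{F}_1$ gives $q = \E(q^\nu)$; the smallest nonnegative root is the extinction probability $q_0$ and $1$ is another root, so $q \in \{q_0, 1\}$, which is exactly $(ii)\Leftrightarrow(iii)$.

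For the main equivalence $(i)\Leftrightarrow(iv)$ I would follow the size-biased spine method of Lyons, Pemantle and Peres. On the product tree space I would construct a joint probability $\tilde{\P}$ carrying a distinguished infinite \emph{spine} $\xi\in\partial\T$ and satisfying
\[
\frac{d\tilde{\P}}{d\P}\bigg|_{\mathfrak{F}_n} = M_n,
\]
which is consistent because $(M_n)$ is a mean-one $\P$-martingale. Under $\tilde{\P}$, the $n$-th spine vertex $\xi|n$ is chosen in $\T_n$ with probability $\bar{r}_{\xi|n}^D/M_n$; along the spine the RIFS has the size-biased law weighted by $M_1 = \sum_i r_i^D$, while off-spine individuals reproduce according to $\P$. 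A standard fact about Radon--Nikodym derivatives of nonnegative martingales then yields the crucial dichotomy
\[
M_\infty < \infty~\tilde{\P}\text{-a.s.} \iff (M_n)~\text{is}~\P\text{-uniformly integrable} \iff \E M_\infty = 1.
\]

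Under $\tilde{\P}$, $M_1$ has the size-biased law $\tilde{\E}(f(M_1)) = \E(M_1\, f(M_1))$, so the integrability $\tilde{\E}(\ln^+ M_1) < \infty$ is precisely condition $(i)$. The spine log-contractions $W_{\xi|n} = \sum_{k=1}^n \ln r_{\xi|k}^{-1}$ form a $\tilde{\P}$-random walk with i.i.d.\ steps distributed as $\E(\sum_i r_i^D\,\delta_{\ln r_i^{-1}})$. Using the spine decomposition
\[
M_\infty = \lim_{n\to\infty}\bar{r}_{\xi|n}^D + \sum_{k=1}^\infty \bar{r}_{\xi|(k-1)}^D \sum_{i\ne\xi_k} r_i^D\, M_\infty^{(\xi|(k-1)i)},
\]
where the $M_\infty^{(\sigma)}$ are independent $\P$-distributed copies of $M_\infty$ attached at off-spine vertices, condition $(i)$ combined with the SLLN for $W_{\xi|n}$ and a Borel--Cantelli argument on the bush first-generation masses gives $M_\infty<\infty$ $\tilde{\P}$-a.s., hence $(i)\Rightarrow(iv)$. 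Conversely, if $(i)$ fails, the same Borel--Cantelli argument forces the bush contributions to diverge $\tilde{\P}$-a.s., so $M_\infty=\infty$ $\tilde{\P}$-a.s., ruling out $(iv)$.

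The main obstacle is the rigorous construction of $\tilde{\P}$ with a measurable spine on the full tree space and the verification of the spine decomposition identity; once these are in place, the identification $\tilde{\E}(\ln^+ M_1) = \E(M_1\ln^+ M_1)$ which links $(i)$ to $\tilde{\P}$-a.s.\ finiteness of $M_\infty$ follows by a direct change-of-variable computation, and the SLLN/Borel--Cantelli conclusion is standard.
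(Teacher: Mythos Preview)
The paper does not prove this theorem; it states it as a known result due to Biggins \cite{Bi77} and refers the reader to Lyons \cite{Ly97} for a conceptual proof via the size-biased spine method of Lyons, Pemantle and Peres \cite{LPP95}. Your sketch follows precisely that LPP/Lyons approach---the easy martingale equivalences $(iv)\Leftrightarrow(v)$, the fixed-point argument for $(ii)\Leftrightarrow(iii)$ via $q=\E(q^\nu)$, and the spine/change-of-measure dichotomy linking $\tilde{\E}(\ln^+M_1)=\E(M_1\ln^+M_1)$ to $\tilde{\P}$-a.s.\ finiteness of $M_\infty$---so your proposal is correct and coincides with the proof the paper cites rather than with anything the paper itself supplies.
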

Recall that the underlying probability space $[\Omega,\mathfrak{F},\P]=\bigotimes_{\s\in \Si_*}[\Omega_\s,\mathfrak{F}_\s,\P_\s]$ is a product space. For each $\tau\in\Si_*$ define the \tit{shift operator}
$\theta_\tau:\Omega\rightarrow\Omega$ by
\begin{equation}\label{shift}
(\theta_\tau\o)_\s:=\o_{\tau\s}\, .
\end{equation}
Besides the branching random walk $\{W_\s\}_{\s\in\T}$ from \eqref{branwalk} we now consider a basic stochastic process $Y$ satisfying the following.
\begin{conds}\label{a.e.continuousw.p.1}
$Y=\{Y_t: t\in\R\}$ is a real-valued measurable stochastic process  on $[\Omega,\mathfrak{F},\P]$ vanishing for $t<0$, which is continuous a.e. with probability $1$, i.e., there exists a set $C\in\mathfrak{B}(\R)\bigotimes\mathfrak{F}$ such that
$\int_\Omega\int_\R\1_{C^c}(t,\o)\, d t\, \P(d\o)=0$ and for any $(t,\o)\in C$ the function $Y_{(\cdot)}(\o)$ is continuous at point $t$.
\end{conds}
The process $Y$ induces a family of i.i.d. copies defined by
$$Y^\s_t(\o):=Y_t(\theta_\s\o)\, ,\quad \s\in\Si_*\, .$$
Then we can introduce the \tit{branching process} associated with $W$ and $Y$ in the sense of Jagers \cite{Ja75}:
\begin{equation}\label{branchingprocess}
Z_t:=\sum_{\s\in\T}Y^\s_{t-W_\s}\, .
\end{equation}
We are interested in the limit behavior of the process $e^{-Dt}Z_t$ as $t\rightarrow\infty$. Here the measure
\begin{equation}\label{latt-nonlatt-meas}
\mu:=\E\left(\sum_{i\in\T_1}\1_{(\cdot)}(W_i)\right)=\E\left(\sum_{i\in\T_1}\1_{(\cdot)}(|\ln r_i|)\right)
\end{equation}
on $\R$ plays a crucial role. Note that $\mu(\R)=\E\nu$ and recall the assumption $1<\E\nu<\infty$.
Denote
\begin{equation}\label{limitconstant}
\lambda(D):=\E\left(\sum_{i\in\T_1}W_i e^{-DW_i}\right)=\E\left(\sum_{i\in\T_1}|\ln r_i|\, r_i^D\right)\, .
\end{equation}
The following \tit{renewal theorem for the branching process} $Z$ is essential for our purposes. (Recall the random martingale limit $M_\infty$ from \
\eqref{eq:mart-conv}.) Note that the essential limit as $t\rightarrow 0$ is meant w.r.t. Lebesgue measure and is defined as the common value of $\lim\esup$ and $\lim\einf$, if the latter coincide.\\

\begin{thms}\label{renewal-branwalk}
Suppose that the process $Y$ satisfies Condition \ref{a.e.continuousw.p.1} and there exists a non-increasing integrable function $h: [0,\infty)\rightarrow(0,\infty)$, such that
\begin{equation}\label{boundedness-condition}
\E\left(\esup_{t\ge 0}\limits\frac{e^{-Dt}|Y_t|}{h(t)}\right)<\infty\, .
\end{equation}
\begin{enumerate}
\item[{\rm (i)}] {\rm [Nerman]} If the measure $\mu$ is non-lattice then
$$\elim_{t\to\infty}\limits e^{-Dt}\E(Z_t)=\frac{1}{\lambda(D)}\int_0^\infty e^{-Ds}\E(Y_s)\, ds$$
and
$$\elim_{t\to\infty}\limits e^{-Dt}Z_t=\frac{M_\infty}{\lambda(D)}\int_0^\infty e^{-Ds}\E(Y_s)\, ds \quad w.p.1\, .$$
\item[{\rm (ii)}] {\rm [Gatzouras]} If the measure $\mu$ is lattice with lattice constant $c$, then for Lebesgue-a.e. $s\in[0,c)$ we have
   $$\lim_{n\rightarrow\infty}e^{-D(s+nc)}\E(Z_{s+nc})=\frac{1}{\lambda(D)}\sum_{n=0}^\infty e^{-D(s+nc)}\E(Y_{s+nc})\, .$$
   and
   $$\lim_{n\rightarrow\infty}e^{-D(s+nc)}Z_{s+nc}=\frac{M_\infty}{\lambda(D)}\sum_{n=0}^\infty e^{-D(s+nc)}\E(Y_{s+nc})\quad w.p.1\, .$$
\end{enumerate}
\end{thms}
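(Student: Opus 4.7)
The plan is to reduce both parts to the classical key renewal theorem by exponentially tilting the intensity measure of the branching random walk $\{W_\sigma\}$, and then to bootstrap the expectation limit to the almost sure one using the branching decomposition at generation $n$, following Nerman and Gatzouras.

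For the expectation identities I would first set $g(u):=\E Y_u$ and $\nu_n(\cdot):=\E\sum_{\sigma\in\T_n}\1_{(\cdot)}(W_\sigma)$. Since $Y^\sigma$ is independent of $\F_n$ for $\sigma\in\T_n$ and distributed as $Y$, the tower property gives $\E(Z_t)=\sum_{n\ge 0}\int g(t-s)\,\nu_n(ds)$. The branching property yields $\nu_n=\mu^{*n}$, and by \eqref{Hausdorffdim} the tilted measure $\tilde\mu(ds):=e^{-Ds}\mu(ds)$ is a probability measure with mean $\lambda(D)$, so $e^{-Ds}\nu_n(ds)=\tilde\mu^{*n}(ds)$. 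Consequently
$$e^{-Dt}\E(Z_t)=\int_0^\infty \tilde g(t-s)\,\tilde U(ds),\quad \tilde g(u):=e^{-Du}g(u),\quad \tilde U:=\sum_{n\ge 0}\tilde\mu^{*n}.$$
Condition \ref{a.e.continuousw.p.1} forces $\tilde g$ to be almost everywhere continuous, and writing $C(\omega):=\esup_{t\ge 0}e^{-Dt}|Y_t(\omega)|/h(t)$, the hypothesis \eqref{boundedness-condition} gives $|\tilde g(u)|\le\E(C)\,h(u)$ for a.e.\ $u$. Since $h$ is non-increasing and integrable, $\tilde g$ is directly Riemann integrable, so the non-lattice (resp.\ lattice) key renewal theorem delivers the two expectation identities with constant $\kappa:=\lambda(D)^{-1}\int_0^\infty e^{-Ds}\E Y_s\,ds$.

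For the almost sure assertions I would apply the branching decomposition
$$Z_t=\sum_{k=0}^{n-1}\sum_{\sigma\in\T_k}Y^\sigma_{t-W_\sigma}+\sum_{\sigma\in\T_n}Z^\sigma_{t-W_\sigma},$$
where $Z^\sigma_u:=Z_u\circ\theta_\sigma$ is the branching process rooted at $\sigma$, independent of $\F_n$ and distributed as $Z$. After multiplication by $e^{-Dt}$, each of the finitely many terms of the first sum is dominated by $e^{-DW_\sigma}C(\theta_\sigma\omega)\,h(t-W_\sigma)\to 0$ (using $h(u)\to 0$). Applying the expectation result to each subtree yields, for every fixed $\sigma$, $\elim_{t\to\infty}e^{-D(t-W_\sigma)}Z^\sigma_{t-W_\sigma}=\kappa M_\infty^\sigma$ w.p.\ $1$, with $M_\infty^\sigma$ the martingale limit of the subtree rooted at $\sigma$. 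Summing and letting $n$ tend to infinity gives
$$\elim_{t\to\infty}e^{-Dt}Z_t=\kappa\sum_{\sigma\in\T_n}e^{-DW_\sigma}M_\infty^\sigma=\kappa M_\infty\quad\text{w.p.}\,1,$$
where the second equality is the standard a.s.\ branching identity for $M_\infty$. Part (ii) is proved identically, restricting $t$ to the grid $s+c\N$ and invoking the discrete renewal theorem.

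The main obstacle lies in justifying the interchange of the essential $t$-limit with the (possibly infinite) sum over $\sigma\in\T_n$ in the second term, since each subtree limit is only essential. This is where Condition \ref{a.e.continuousw.p.1} combined with the majorant $C$ from \eqref{boundedness-condition} does the work: it furnishes a deterministic envelope $\E(C)\,h$ valid uniformly across all shifted subtrees, and Nerman's dominated-convergence argument on the tree then controls the fluctuations of $e^{-D(t-W_\sigma)}Z^\sigma_{t-W_\sigma}$ uniformly in $\sigma$, which is the precise technical role of hypothesis \eqref{boundedness-condition}.
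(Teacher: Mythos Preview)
The paper does not give a self-contained proof of this theorem; it simply records that (i) is Nerman's theorem, that (ii) is extracted by Gatzouras from Nerman's argument, and that both proofs go through verbatim under Condition~\ref{a.e.continuousw.p.1} and \eqref{boundedness-condition} once one works with essential limits (the expectation statements being instances of the classical renewal theorem, cf.\ Feller). So there is no ``paper's own proof'' to compare against beyond these attributions.

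Your treatment of the expectation limits is fine and is exactly the standard reduction: tilt $\mu$ by $e^{-Ds}$ to a probability measure with mean $\lambda(D)$, recognise $e^{-Dt}\E Z_t$ as the renewal convolution $\tilde g * \tilde U$, and invoke the key renewal theorem after checking direct Riemann integrability of $\tilde g$ via the majorant $h$.

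However, your almost sure argument contains a genuine circularity. You write that ``applying the expectation result to each subtree yields, for every fixed $\sigma$, $\elim_{t\to\infty}e^{-D(t-W_\sigma)}Z^\sigma_{t-W_\sigma}=\kappa M_\infty^\sigma$ w.p.\ $1$.'' The expectation result only tells you that $e^{-Du}\E Z_u\to\kappa$; it says nothing about the a.s.\ behaviour of $e^{-Du}Z^\sigma_u$. The statement $e^{-Du}Z^\sigma_u\to\kappa M_\infty^\sigma$ is precisely the theorem you are trying to prove (applied to the shifted tree), so invoking it for each subtree assumes the conclusion. The subsequent paragraph about interchanging the essential limit with the sum over $\T_n$ is not the real obstacle; the real obstacle is that you have no a.s.\ limit for the individual summands to interchange with.

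Nerman's actual mechanism is different: one does \emph{not} pass to the a.s.\ limit in each subtree. Instead one conditions on a suitable optional line in the tree along which there are many individuals, observes that the variables $e^{-D(t-W_\sigma)}Z^\sigma_{t-W_\sigma}$ are, conditionally, i.i.d.\ with common mean tending to $\kappa$ (by the expectation result), and then uses a law-of-large-numbers argument across the many subtrees to replace these random quantities by their mean. The weights $e^{-DW_\sigma}$ sum to $M_n\to M_\infty$, which produces the factor $M_\infty$. Hypothesis \eqref{boundedness-condition} enters to control the variance/tails uniformly so that this averaging is legitimate. Your sketch would need to be restructured along these lines to avoid the circularity.
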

Assertion (i) is shown in Nerman \cite{Ne81} for the case of Skorohod-regular processes (which are a.e. continuous w.p.1). The lattice case (ii) is derived in \cite{Ga00} from Nerman's proof of (i). Note that these proofs remain valid under our Condition \ref{a.e.continuousw.p.1} and (\ref{boundedness-condition}) when considering essential limits. The convergence of the expectations may be considered as a special case of the classical renewal theorem for deterministic functions. (Feller's proof in \cite{Fel66} works also for the essential limits.) A straightforward consequence of (ii) is the following.\\

\begin{cors}\label{averageconv}
Suppose that the conditions of Theorem \ref{renewal-branwalk} {\rm (ii)} are satisfied and
\begin{equation}\label{supremum}
\int_0^c \sup_n \left(e^{-D(s+nc)}\E\left|Z_{s+nc}\right|\right)\, ds <\infty\, .
\end{equation}
Then we have
$$\lim_{T\rightarrow\infty}\frac{1}{T}\int_0^T e^{-Dt}\E(Z_t)\, dt=\frac{1}{\lambda(D)}\int_0^\infty e^{-Ds}\E(Y_s)\, ds$$
and
$$\lim_{T\rightarrow\infty}\frac{1}{T}\int_0^T e^{-Dt}Z_t\, dt=\frac{M_\infty}{\lambda(D)}\int_0^\infty e^{-Ds}\E(Y_s)\, ds\quad  w.p.1\, .$$
\end{cors}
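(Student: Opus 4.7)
The plan is to pass from the lattice convergence of Theorem~\ref{renewal-branwalk}(ii) to the continuous Ces\`aro average $\frac{1}{T}\int_0^T$ in three steps: (a) decompose $[0,T]$ into blocks of length $c$; (b) on each block apply the elementary fact that $a_n\to a$ implies $\frac{1}{N}\sum_{n<N}a_n\to a$; (c) interchange the outer limit with integration in the block variable $s\in[0,c)$ by dominated convergence, the integrable majorant coming from \eqref{supremum}.

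For the expectation version, write $T=Nc+u$ with $0\le u<c$ and change variables $t=s+nc$ on each block $[nc,(n+1)c)$:
$$\frac{1}{T}\int_0^{Nc}e^{-Dt}\E(Z_t)\,dt \;=\; \frac{1}{T}\int_0^c\sum_{n=0}^{N-1}e^{-D(s+nc)}\E(Z_{s+nc})\,ds.$$
Since $T/N\to c$, the prefactor is asymptotically $\frac{1}{cN}$. For Lebesgue-a.e.\ $s\in[0,c)$, Theorem~\ref{renewal-branwalk}(ii) gives $e^{-D(s+nc)}\E(Z_{s+nc})\to\frac{1}{\lambda(D)}\sum_{k\ge 0}e^{-D(s+kc)}\E(Y_{s+kc})$, so the Ces\`aro average of this sequence converges to the same limit. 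The majorant $s\mapsto\sup_n e^{-D(s+nc)}\E|Z_{s+nc}|$ is in $L^1([0,c))$ by \eqref{supremum}, hence dominated convergence on $[0,c)$ yields convergence of $\frac{1}{T}\int_0^T$ to the $s$-average of this limit. Reassembling $\sum_{k\ge 0}\int_0^c(\cdot)\,ds$ as $\int_0^\infty(\cdot)\,dt$ then gives the stated identity. The residual term $\frac{1}{T}\int_{Nc}^T$ is dominated by the same majorant and is $O(1/T)$.

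For the almost-sure statement the scheme is identical but the dominator must be pathwise. Set $H(s,\o):=\sup_n e^{-D(s+nc)}|Z_{s+nc}(\o)|$; by \eqref{supremum} and Fubini, $\E\int_0^c H(s,\cdot)\,ds<\infty$, so for $\P$-a.e.\ $\o$ the function $H(\cdot,\o)$ lies in $L^1([0,c))$. Applying Fubini to the a.s.\ lattice convergence of Theorem~\ref{renewal-branwalk}(ii) shows that for $\P$-a.e.\ $\o$ the convergence
$$e^{-D(s+nc)}Z_{s+nc}(\o)\;\longrightarrow\;\frac{M_\infty(\o)}{\lambda(D)}\sum_{k\ge 0}e^{-D(s+kc)}\E(Y_{s+kc})$$
holds for Lebesgue-a.e.\ $s\in[0,c)$; Ces\`aro averaging preserves the limit, and pathwise dominated convergence on $[0,c)$ with dominator $H(\cdot,\o)$ completes the interchange of limit and integration.

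The main obstacle I expect is this last step: promoting the ``for each fixed $s$, a.s.\ in $\o$'' statement of Theorem~\ref{renewal-branwalk}(ii) to a ``for a.s.\ $\o$, for a.e.\ $s$'' statement strong enough to justify pathwise dominated convergence. This is precisely where hypothesis \eqref{supremum} is indispensable: without an integrable pathwise majorant on $[0,c)$, Ces\`aro convergence at each fixed $s$ does not translate into convergence of the integral. The computation of the limit constant and the treatment of the terminal piece $\int_{Nc}^T$ are then routine.
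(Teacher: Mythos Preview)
Your proposal is correct and follows essentially the same route as the paper: both use Fubini to upgrade the ``for a.e.\ $s$, a.s.\ in $\omega$'' conclusion of Theorem~\ref{renewal-branwalk}(ii) to ``for a.s.\ $\omega$, for a.e.\ $s$'', then invoke dominated convergence on $[0,c)$ with the pathwise majorant $H(s,\omega)=\sup_n e^{-D(s+nc)}|Z_{s+nc}(\omega)|$, whose $L^1$-integrability follows from \eqref{supremum} by Fubini. The only cosmetic difference is the order of operations: the paper first integrates over $[0,c)$ to obtain a sequence $a_n=\int_0^c e^{-D(s+nc)}Z_{s+nc}\,ds$, shows $a_n\to\int_0^c L_s\,ds$ by dominated convergence, and then passes to arithmetic means, whereas you take Ces\`aro means inside the $s$-integral first and then integrate; either order works with the same dominator.
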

\begin{proof}
We show the almost sure convergence. The arguments for the expectations are similar.\\ The branching process $Z$ is measurable with respect to $\mathfrak{B}(\R)\otimes\mathfrak{F}$ and therefore
$$E:=\left\{ (s,\o)\in [0,\infty)\times\Omega: \lim_{n\rightarrow\infty}e^{-D(s+nc)}Z_{s+nc}(\o)=L_s\right\}
\in\mathfrak{B}(\R)\otimes\mathfrak{F}\, ,$$
where $L_s$ denotes the right hand side of (ii). Theorem \ref{renewal-branwalk} {\rm (ii)}
and Fubini yield
$$0=\int_0^c \int_\Omega \1_{E^c}(s,\o)\, \P(d\o)\, ds = \int_\Omega \int_0^c \1_{E^c}(s,\o)\, ds\, \P(d\o)\, ,$$
i.e., w.p.1 we get $\lim_{n\rightarrow\infty}e^{-D(s+nc)}Z_{s+nc}=L_s$ for a.e. $s\in [0,c)$.
Taking into regard \eqref{supremum} (where we could omit the expectation for convergence w.p.1) and dominated convergence we infer
$$\lim_{n\rightarrow\infty}\int_0^c e^{-D(s+nc)}Z_{s+nc}\, ds=\int_0^c L_s\, ds$$
w.p.1. Instead of the sequence on the left hand side we may also take its arithmetic means which converge to the same limit. The latter is equal to the right hand side of the assertion as an easy calculation shows. Similarly, the limit of the arithmetic means is the same as the limit of the Ces\'{a}ro means on the left hand side of the assertion.
\end{proof}
\subsection{Application to fractal curvatures}\label{ssec:applfraccurv}
We now turn back to the self-similar random set $F$ from \eqref{SSRS}. Let $\mathcal{K}$ be the space of non-empty compact subsets of
our primary compact set $J$. $\mathfrak{B}(\mathcal{K})$ denotes the Borel $\s$-algebra with respect to the Hausdorff distance $d_H$ on $\mathcal{K}$.
We further consider the space $\mathcal{F}$ of closed subsets of $\rd$ provided with the hit-and-miss topology (generated by the sets $\{A\in\mathcal{F}:A\cap O\ne\emptyset\}$ and $\{A\in\mathcal{F}:A\cap C=\emptyset\}$ for open $O$ and closed $C$) and the Borel-$\s$-algebra $\mathfrak{B}(\mathcal{F})$. The topology restricted to $\mathcal{K}$ is generated by the metric $d_H$.\\
(Recall the notations \eqref{parallelset} and \eqref{closcompl}
for parallel sets and the closures of their complements. It is easy to see, that the mapping
$$(r,K)\mapsto \widetilde{K(r)} \quad\mbox{from}\quad [0,\infty)\times \mathcal{K} \quad\mbox{to}\quad \mathcal{F}$$
is $(\mathfrak{B}([0,\infty))\otimes \mathfrak{B}(\mathcal{K})\, ,\, \mathfrak{B}(\mathcal{F}))$-measurable.\\
Let $\PR$ be the space of subsets of $\rd$ with positive reach (cf. the end of Section \ref{sec:classcurv}). According to \cite[Proposition 1.1.1]{Za86a} we get $\PR\in\mathfrak{B}(\mathcal{F})$. Moreover, the mapping $X\mapsto \nor X$ from $\PR$ into the space of closed subsets of $\rd\times\rd$ is measurable. (Recall the definition of the unit normal bundle $\nor X$ from Section \ref{sec:classcurv}.)
 This implies the following measurability property (for the {\it normal reflection} $\rho:(x,n)\mapsto (x,-n)$):\\
\begin{lems}\label{lem:measurable-posreach}
\begin{eqnarray*}
Reg:&=&\bigg\{(r,K)\in [0,\infty)\times\mathcal{K}: \widetilde{K(r)}\in\PR ,\, \nor \widetilde{K(r)}\cap\rho(\nor\widetilde{ K(r)}) =\emptyset \bigg\}\\
& &\in\mathfrak{B}([0,\infty))\otimes \mathfrak{B}(\mathcal{K})\, .
\end{eqnarray*}
\end{lems}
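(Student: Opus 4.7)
The plan is to write $Reg = Reg_1\cap Reg_2$ with
\[
Reg_1 := \{(r,K) : \widetilde{K(r)}\in\PR\},\qquad Reg_2 := \{(r,K)\in Reg_1 : \nor\widetilde{K(r)}\cap\rho(\nor\widetilde{K(r)})=\emptyset\},
\]
and verify each piece is Borel. The measurability of $Reg_1$ is immediate from the two ingredients quoted just before the lemma: the map $(r,K)\mapsto\widetilde{K(r)}$ is $(\mathfrak{B}([0,\infty))\otimes\mathfrak{B}(\mathcal{K}),\mathfrak{B}(\mathcal{F}))$-measurable, and $\PR\in\mathfrak{B}(\mathcal{F})$ by \cite[Proposition 1.1.1]{Za86a}, so $Reg_1$ is a preimage of a Borel set under a measurable map.

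For $Reg_2$, I would combine the quoted measurability of $\nor$ on $\PR$ with the fact that the normal reflection $\rho:(x,n)\mapsto(x,-n)$ is a homeomorphism of $\rd\times\rd$ and therefore induces a continuous (hence measurable) map $A\mapsto\rho(A)$ on $\mathcal{F}(\rd\times\rd)$: it simply permutes the generating hit and miss sets of the hit-and-miss topology. This yields two measurable maps $\Psi_1(r,K):=\nor\widetilde{K(r)}$ and $\Psi_2(r,K):=\rho(\nor\widetilde{K(r)})$ on $Reg_1$ with values in $\mathcal{F}(\rd\times\rd)$. The main obstacle is then to show that the disjointness condition $\Psi_1(r,K)\cap\Psi_2(r,K)=\emptyset$ cuts out a Borel subset of $Reg_1$; disjointness of two closed sets is not, in general, a Borel relation in the hit-and-miss topology.

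The way around this is to exploit the compactness of $\nor\widetilde{K(r)}$ that holds throughout $Reg_1$: $\partial\widetilde{K(r)}\subset\partial K(r)\subset J(r)$ is compact because $J$ is compact, and $\nor\widetilde{K(r)}\subset\partial\widetilde{K(r)}\times S^{d-1}$. For $A$ compact and $B$ closed in $\rd\times\rd$ a standard argument yields the countable characterization
\[
A\cap B\ne\emptyset \iff \forall n\in\N,\ \exists q\in Q:\ B(q,1/n)\cap A\ne\emptyset \text{ and } B(q,1/n)\cap B\ne\emptyset,
\]
with $Q\subset\rd\times\rd$ a fixed countable dense subset; the non-trivial direction picks $a_n\in B(q_n,1/n)\cap A$, $b_n\in B(q_n,1/n)\cap B$ with $d(a_n,b_n)<2/n$, extracts a convergent subsequence $a_{n_k}\to a\in A$, and concludes $a\in B$ since $b_{n_k}\to a$ and $B$ is closed. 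Applied with $A=\Psi_1$ and $B=\Psi_2$, this writes $Reg_1\setminus Reg_2$ as $\bigcap_n\bigcup_{q\in Q}\Psi_1^{-1}(H_{q,n})\cap\Psi_2^{-1}(H_{q,n})$, where the hit set $H_{q,n}=\{A\in\mathcal{F}(\rd\times\rd):A\cap B(q,1/n)\ne\emptyset\}$ is open in the hit-and-miss topology, hence Borel. Measurability of $\Psi_1,\Psi_2$ then makes $Reg_2$, and therefore $Reg$, Borel.
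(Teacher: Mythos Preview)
Your proof is correct and follows exactly the route the paper indicates: the paper does not give a detailed argument but simply states that the lemma follows from the measurability of $(r,K)\mapsto\widetilde{K(r)}$, the fact $\PR\in\mathfrak{B}(\mathcal{F})$, and the measurability of $X\mapsto\nor X$ on $\PR$, which are precisely the ingredients you assemble. Your explicit handling of the disjointness condition via compactness of $\nor\widetilde{K(r)}\subset K(r)\times S^{d-1}$ and the countable hit--miss characterization fills in a detail the paper leaves to the reader.
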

The elements of $Reg$ will be called {\it regular pairs}.\\
In Fu \cite[Theorem 4.1]{Fu85} it is shown that in space dimensions $d\le 3$ for any compact set $K$ there exists a bounded exceptional set $N$ of Lebesgue measure $0$ such that for any $r\notin N$ the set $\widetilde{K(r)}$ has positive reach and $\nor \widetilde{K(r)}\cap\rho(\nor\widetilde{ K(r)})=\emptyset$. Moreover, if $r>\sqrt{d/(2d+2)}|K|$ both these assertions hold for {\it any} space dimension $d$. This basic result, the preceding lemma and Fubini imply the following property of the self-similar random set $F$:\\
\begin{cors}
In space dimensions $d\le 3$ w.p.1 for Lebesgue-a.e. $r>0$ the pair $(r,F)$ is regular in the above sense.
\end{cors}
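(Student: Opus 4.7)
The plan is to apply Fubini's theorem combined with the quoted theorem of Fu. For each deterministic compact $K\subset\rd$ in dimension $d\le 3$, Fu \cite[Theorem 4.1]{Fu85} gives a Lebesgue-null exceptional set of bad $r$; integrating over $\Omega$ converts this deterministic a.e.\ statement into the desired joint a.s.\ statement.

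First I would verify that $F$ is a random compact set on the non-extinction event, i.e., that $F:\Omega\to\mathcal{K}$ is $(\mathfrak{F},\mathfrak{B}(\mathcal{K}))$-measurable on $\{F\neq\emptyset\}$. This is routine for the Mauldin--Williams construction: $F=\bigcap_{n\ge 1}F_n$ with $F_n:=\bigcup_{\s\in\T_n}J_\s$, each $F_n$ is a finite union of the measurable random compact sets $J_\s=\bar S_\s J$ indexed by the random tree $\T_n$, and the decreasing intersection of random compact sets is again random compact in the Hausdorff topology. Consequently the map $(r,\o)\mapsto(r,F(\o))$ from $(0,\infty)\times\Omega$ to $[0,\infty)\times\mathcal{K}$ is jointly measurable, and together with Lemma \ref{lem:measurable-posreach} the set
$$A:=\bigl\{(r,\o)\in(0,\infty)\times\Omega:F(\o)\neq\emptyset,\;(r,F(\o))\notin Reg\bigr\}$$
lies in $\mathfrak{B}((0,\infty))\otimes\mathfrak{F}$.

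For the main step, fix $\o$ with $F(\o)\neq\emptyset$ and apply Fu's theorem to the compact set $F(\o)$: there is a bounded Lebesgue-null set $N_\o\subset(0,\infty)$ such that for every $r\notin N_\o$ both $\widetilde{F(\o)(r)}\in\PR$ and $\nor\widetilde{F(\o)(r)}\cap\r(\nor\widetilde{F(\o)(r)})=\emptyset$, so the $\o$-section $A_\o$ is contained in $N_\o$ and hence has Lebesgue measure zero. A first application of Fubini on $(0,T)\times\Omega$ gives $(\mathrm{Leb}\otimes\P)(A\cap((0,T)\times\Omega))=0$ for every $T<\infty$; a second application in the other order forces $A_\o\cap(0,T)$ to be Lebesgue-null outside a $\P$-null event, and letting $T$ range over the positive integers yields the conclusion.

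No substantive obstacle is expected: the argument is just Fubini combined with the already-cited results of Fu and Lemma \ref{lem:measurable-posreach}. The only bookkeeping worth flagging is the joint measurability of the composite map $(r,\o)\mapsto(r,F(\o))$, but this reduces to the standard fact that the Mauldin--Williams recursive construction produces a random closed set in the sense of stochastic geometry.
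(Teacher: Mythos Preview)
Your proposal is correct and follows exactly the approach the paper itself indicates: the paper does not give a detailed proof but simply states that Fu's theorem, Lemma~\ref{lem:measurable-posreach}, and Fubini together yield the corollary. Your write-up is a faithful and careful unpacking of precisely that sentence, with the added (and appropriate) care about the joint measurability of $(r,\omega)\mapsto(r,F(\omega))$.
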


In higher dimensions we shall formulate this as a \tit{main geometric condition} on the self-similar random set $F$:\\
\begin{defs}\label{def:regular SSRS}
The self-similar random set $F$ is called {\it regular} if w.p.1 for Lebesgue-a.e. $r>0$ the pair $(r,F)$ is regular.
\end{defs}
For regular pairs $(r,F)$ the Lipschitz-Killing curvature measures of $F(r)$ are determined by means of those for $\widetilde{F(r)}$ (cf. Section \ref{sec:classcurv}):
%
\begin{equation}\label{eq:randcurvmeas-parall}
C_k(F(r), \cdot)=(-1)^{d-1-k}C_k(\widetilde{F(r)},\cdot)\, ,\quad k=0,\ldots,d-1\, .
\end{equation}
According to Fu's result mentioned above this holds, in particular, for any realization of $F$ and $r>r_d$ where
\begin{equation*}\label{def:r_d}
r_d:=\sqrt{d/(2d+2)}\, |J|\, .
\end{equation*}
For the exceptional pairs we set $$C_k(F(r),\cdot):=0\, .$$
Then the $C_k(F(r),\cdot)$ are {\it random signed measures}. The corresponding measurability properties follow from \cite[Theorem 2.1.2]{Za86a} and Lemma \ref{lem:measurable-posreach}:
If the self-similar random set $F$ is regular, the mapping
$(r,F)\mapsto C(F(r),B)$  for any Borel set $B\in\rd$ is product measurable. Moreover, one obtains the following {\it continuity property}:\\
\begin{lems}\label{lem:contcurvmeas}
For any $(r_0,K)\in Reg$ and $r\rightarrow r_0>0$ we have $(r,K)\in Reg$ and the measures $C_k(K(r),\cdot)$ weakly converge to $C_k(K(r_0),\cdot)$.
\end{lems}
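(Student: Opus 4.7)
The plan is to exploit the representation of $\widetilde{K(r)}$, for $r$ close to the regular value $r_0$, as a parallel set of $\widetilde{K(r_0)}$. Using the identity $\widetilde{K(r_0)} = \{d(\cdot,K) \ge r_0\}$, which holds whenever $(r_0,K) \in Reg$, one checks directly that for $r > r_0$, $\widetilde{K(r)}$ coincides with the inner parallel set $\widetilde{K(r_0)} \ominus (r-r_0)$, while for $r < r_0$, $\widetilde{K(r)}$ coincides with the outer parallel set $\widetilde{K(r_0)}(r_0-r)$.

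The first step would be to show that $(r,K) \in Reg$ for all $r$ in a neighborhood of $r_0$, with a uniform positive lower bound on $\rea \widetilde{K(r)}$. Set $R := \rea \widetilde{K(r_0)} > 0$. Inside the open tube of width $R$ around $\partial \widetilde{K(r_0)}$, the distance function to $K$ has $C^{1,1}$ structure (on each side separately), so for $|r - r_0|$ sufficiently small, $\partial \widetilde{K(r)}$ is the image of $\partial \widetilde{K(r_0)}$ under a Lipschitz flow of the form $x \mapsto x \pm (r - r_0)\, n_{\widetilde{K(r_0)}}(x)$ along the unit outer normal of $\widetilde{K(r_0)}$. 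A standard geometric estimate then yields $\rea \widetilde{K(r)} \ge R - |r - r_0|$, and the condition $\nor \widetilde{K(r)} \cap \rho(\nor \widetilde{K(r)}) = \emptyset$ persists because $\nor \widetilde{K(r_0)}$ is compact and disjoint from its reflection (an open condition preserved under the Lipschitz deformation).

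Given the uniform lower reach bound, the sets $\widetilde{K(r)}$ converge to $\widetilde{K(r_0)}$ in the Hausdorff metric as $r \to r_0$, and the unit normal bundles converge in Hausdorff distance in $\rd \times S^{d-1}$. The standard continuity of the normal cycle on the space of positive-reach sets with uniformly bounded-below reach (cf.\ \cite{Za86a}, \cite{RZ03}) then gives weak convergence of the associated rectifiable currents; integrating the Lipschitz--Killing curvature forms $\vphi_k$ described at the end of Section~\ref{sec:classcurv} against them yields $C_k(\widetilde{K(r)}, \cdot) \to C_k(\widetilde{K(r_0)}, \cdot)$ weakly for $k = 0, \ldots, d-1$. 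Combined with \eqref{eq:randcurvmeas-parall} this proves the lemma for $k \le d-1$. For $k = d$ the assertion is even stronger: Lebesgue measure restricted to $K(r)$ converges in total variation to Lebesgue measure on $K(r_0)$ because $|K(r) \triangle K(r_0)| \to 0$ by Hausdorff continuity of $r \mapsto K(r)$ together with continuity of the volume of parallel sets of a compact set.

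The main obstacle is the first step: quantitatively verifying that both the positive-reach property and the normal-reflection condition persist uniformly as $r$ varies near the regular value $r_0$. This rests on the local structure theory of sets of positive reach near a regular level of the distance function, in particular the near-identity normal flow mentioned above. Once these uniform bounds are in hand, the convergence of curvature measures reduces to the standard weak continuity machinery for Lipschitz--Killing curvature measures and their normal-cycle representation.
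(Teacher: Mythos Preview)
Your outline is essentially the content of the paper's proof, which is a one-line citation of \cite[Propositions~1 and~3]{RZ03} and \cite[Proposition~6]{RZ05}: the former supply the openness of the $Reg$ condition in $r$ (with a locally uniform lower reach bound), the latter the weak continuity of the curvature measures once that bound is in hand. Your parallel-set and normal-flow picture is the right way to think about what those propositions establish.

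One point deserves more care. The estimate $\rea\widetilde{K(r)}\ge R-|r-r_0|$ is indeed standard for $r<r_0$, since then $\widetilde{K(r)}=\widetilde{K(r_0)}(r_0-r)$ is an \emph{outer} parallel set of a set of reach $R$. For $r>r_0$, however, the erosion $\widetilde{K(r_0)}\ominus(r-r_0)$ is not controlled by $R=\rea\widetilde{K(r_0)}$ alone: positive reach of $\widetilde{K(r_0)}$ governs only the tubular neighborhood on the $K(r_0)$ side, and says nothing about the inward normal flow you invoke. What makes the inward direction work is precisely the normal-reflection hypothesis $\nor\widetilde{K(r_0)}\cap\rho(\nor\widetilde{K(r_0)})=\emptyset$, which forces $\partial\widetilde{K(r_0)}$ to be a two-sided Lipschitz hypersurface and gives a tubular neighborhood on the interior side as well---this two-sidedness is exactly what \cite[Proposition~3]{RZ03} provides. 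So your argument goes through, but the radius available on the inner side is not $R$; you should replace the uniform bound $R-|r-r_0|$ by a qualitative statement that some positive lower bound exists for $r$ in a neighborhood of $r_0$, which is all the lemma requires.
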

The proof follows from \cite[Propositions 1 and 3]{RZ03} and \cite[Proposition 6]{RZ05}.
(Recall that if $(r,K)\in Reg$ both the sets $K(r)$ and $\widetilde{K(r)}$ are Lipschitz $d$-manifolds of bounded curvature in the sense of \cite{RZ05}.)\\
This implies the following auxiliary result.\\
\begin{cors}\label{auxcontinuity}
If the self-similar random set $F$ is regular, then the random function $\Phi(r):=C_k(F(r))$ is a.e. continuous w.p.1.
\end{cors}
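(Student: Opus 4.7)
The plan is to combine Definition~3.3 (regularity of $F$) with the pointwise continuity statement of Lemma~\ref{lem:contcurvmeas}, and then pass from weak convergence of the signed curvature measures to convergence of their total masses.

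First, I would fix an $\omega$ in the full-measure event guaranteed by regularity of $F$, so that the random set
$$R(\omega):=\{r>0:(r,F(\omega))\in Reg\}$$
has full Lebesgue measure in $(0,\infty)$. It suffices to show that for every such $\omega$, the deterministic function $r\mapsto\Phi(r)(\omega)=C_k(F(\omega)(r))$ is continuous at every $r_0\in R(\omega)$.

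Fix such an $r_0>0$. By Lemma~\ref{lem:contcurvmeas} applied to $K=F(\omega)$, there is a neighborhood $U$ of $r_0$ on which $(r,F(\omega))\in Reg$, and as $r\to r_0$ the signed measures $C_k(F(\omega)(r),\cdot)$ converge weakly to $C_k(F(\omega)(r_0),\cdot)$. In particular the exceptional convention $C_k(F(r),\cdot):=0$ is never invoked for $r\in U$, so on this neighborhood $\Phi(r)(\omega)$ is genuinely the total Lipschitz--Killing curvature. Now observe that the measures $C_k(F(\omega)(r),\cdot)$ with $r\in U$ are supported on the boundaries $\partial F(\omega)(r)$, all of which lie in the common compact set $J(\sup U)$. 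Hence testing the weak convergence against a continuous, compactly supported cut-off $\psi$ which equals $1$ on $J(\sup U)$ yields
$$\Phi(r)(\omega)=\int\psi\,dC_k(F(\omega)(r),\cdot)\longrightarrow\int\psi\,dC_k(F(\omega)(r_0),\cdot)=\Phi(r_0)(\omega)$$
as $r\to r_0$. This proves continuity at $r_0$, and since $R(\omega)$ has full measure, $\Phi$ is a.e. continuous on $(0,\infty)$ for almost every $\omega$.

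The only point that requires care is the passage from weak convergence of the signed measures $C_k(F(\omega)(r),\cdot)$ to convergence of their total masses; this is where the uniform-compact-support observation in the previous paragraph is needed, since weak convergence of signed measures does not in general imply convergence of total variation. Once that is handled via the common compact support and an integration against a continuous cut-off equal to $1$ there, the corollary follows directly from regularity together with Lemma~\ref{lem:contcurvmeas}.
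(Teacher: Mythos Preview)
Your proof is correct and follows exactly the route the paper intends: the paper presents the corollary as an immediate consequence of Lemma~\ref{lem:contcurvmeas} (``This implies the following auxiliary result'') without writing out any details. You have simply made explicit the two points left implicit there---that regularity yields, for almost every $\omega$, a full-measure set of regular radii at which the lemma applies, and that weak convergence of the curvature measures upgrades to convergence of the totals $C_k(F(r))$ via a compactly supported cut-off, using the uniform boundedness of the supports in a neighborhood of $r_0$. Nothing is missing and nothing differs from the paper's approach.
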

\vskip3mm

\begin{rems}
If $k=d$ this property remains valid for general compact sets and all $r>0$, i.e. we need not restrict to the class $\PR$. For $k=d-1$ the set $\PR$ can be replaced by the set of $(\mathcal{H}^{d-1},d-1)$-rectifiable closed subsets of $\rd$ in the sense of Federer \cite{Fe69}. The above measurability and continuity properties for this case are treated, e.g., in \cite{Za82}. (Here the weak convergence of $C_{d-1}(K(r),\cdot)$ to $C_{d-1}(K(r_0),\cdot)$ as $r\rightarrow r_0$ is a well-known result from geometric measure theory.)
\end{rems}

In the Appendix it will be shown that parallel sets of distances greater than $\sqrt{2}|J|$ are nice sets concerning  their regularity properties. Therefore in the sequel we fix an arbitrary $$R>\sqrt{2}\, .$$
It turns out that the relevant formulas below do not depend on the choice of $R$.\\
In order to formulate the remaining conditions on $F$ and to apply the renewal theorem we now turn back to the tree
construction from Section \ref{ssec:recconstr-branwalks}:\\
For fixed $k\in\{0,1,\ldots d\}$ we consider the {\it basic stochastic processes}
\begin{equation}\label{curvmeas-renewalprocess}
Z_t:=(\j)^{-k}e^{kt}\, C_k(F(\j e^{-t}))\1_{[0,\infty)}(t)
\end{equation}
and
\begin{equation}\label{curvmeas-auxprocess}
Y_t:=Z_t-\sum_{i\in\T_1}Z_{t-W_i}^i\, .
\end{equation}
Iterating the last equation we get for $Z_t$ the branching process representation \eqref{branchingprocess}: $$Z_t=\sum_{\s\in\T}Y_{t-W_\s}^\s\, .$$
(Recall the notation $X^\s_t(\o)=X_t(\theta_\s(\o))$, $\s\in\Si_*$, for a process $X$ and the shift operator \eqref{shift} on the probability space \eqref{basicprobspace}.) Here we have in mind the scaling property and the additivity of the Lipschitz-Killing curvature measure $C_k$ in order to obtain the conditions on the process $Y$ for the renewal theorem. The continuity condition \ref{a.e.continuousw.p.1} will follow from the regularity of the self-similar random set $F$ if $k\le d-2$ or from the rectifiability of the boundaries of its parallel sets if $k=d-1$. For the boundedness condition \eqref{boundedness-condition} we will use an additional assumption if $k\le d-2$, which is formulated in the language of random stoppings in the tree construction:\\
For any $r>0$ we define the {\it random subtree}
\begin{equation}\label{subtree-T(r)}
\T(r):=\{\s\in\T:  \j\, \bar{r}_\s \le r <  \j\, \bar{r}_{\s||\s| -1}\}\, .
\end{equation}
This is a so-called {\it Markov stopping} on our probability space. \\
Recall the above notations
 $F^\s=\bar{S}_\s^{-1}(F)\cap J=F\circ\theta_\s$
for the shift operator $\theta$ from \eqref{shift}. Then we obtain from the representation
 $F=\bigcup_{\s\in\T(r)}F_\s$ with $F_\s=F\cap\bar{S}_\s J$
  the stochastic self-similarity
\begin{equation}\label{self-sim-Markov-stop}
F=\bigcup_{\s\in\T(r)}\bar{S}_\s(F^\s)\, ,
\end{equation}
where $F^\s,\, \s\in\Si_*$, are i.i.d. copies of $F$ and the random similarities
$\bar{S}_\s,\, \s\in\Si_*$, and the random subtree $\T(r)$ are as before and independent of the $F^\s$.
At the same time we get for any $r>0$ the representation
$$F(r)=\bigcup_{\s\in\T(r)}\bar{S}_\s\bigg(F^\s\big(\frac{r}{\bar{r}_\s}\big)\bigg)\, .$$
For any $\s\in\T(r)$ we have $\frac{r}{\bar{r}_\s}\ge R|J|$ and $F^\s\subset J$. Then Theorem \ref{thm-appendix} in the Appendix implies that $\partial\big(F^\s\big(\frac{r}{\bar{r}_\s}\big)\big)$ is a Lipschitz $(d-1)$-submanifold. Since the number of $\s\in\T(r)$ is finite, we have shown the following:\\

\begin{lems}
With probability 1 for any $r>0$ the boundary of the random set $F(r)$ is $(\mathcal{H}^{d-1},d-1)$-rectifiable.
\end{lems}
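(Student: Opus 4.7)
The plan is to exploit the Markov-stopping decomposition
$$F(r)=\bigcup_{\s\in\T(r)}\bar{S}_\s\Big(F^\s\big(r/\bar{r}_\s\big)\Big)$$
established just before the lemma, and show that each of the finitely many pieces on the right has a boundary which is a Lipschitz $(d-1)$-submanifold of $\rd$. Since a Lipschitz $(d-1)$-submanifold is $(\mathcal{H}^{d-1},d-1)$-rectifiable in the sense of Federer, and a finite union of such sets is again rectifiable, and the boundary of a finite union is contained in the union of the boundaries, the result will follow.

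First I would fix $\o\in\Omega$ outside a null set on which the open set condition \eqref{OSC} holds for every node of the Galton--Watson tree. For each $r>0$ the subtree $\T(r)$ in \eqref{subtree-T(r)} is a Markov stopping, and by construction it is finite w.p.1 (its elements have disjoint images under $\bar{S}_\s$ inside $J$, and the radii $\bar{r}_\s$ are bounded below on $\T(r)$). For each $\s\in\T(r)$, the defining inequality $r\ge \j\bar{r}_\s$ together with the choice $R>\sqrt{2}$ gives
$$\frac{r}{\bar{r}_\s}\ge R|J|>\sqrt{2}\,|J|\, ,$$
and moreover $F^\s\subset J$. Thus the pair $(r/\bar{r}_\s,F^\s)$ satisfies the hypotheses of Theorem \ref{thm-appendix} in the Appendix, which guarantees that $\partial\bigl(F^\s(r/\bar{r}_\s)\bigr)$ is a Lipschitz $(d-1)$-submanifold of $\rd$.

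Applying the similarity $\bar{S}_\s$ preserves the Lipschitz submanifold property, so each $\partial\bar{S}_\s\bigl(F^\s(r/\bar{r}_\s)\bigr)$ is Lipschitz $(d-1)$-submanifold, hence $(\mathcal{H}^{d-1},d-1)$-rectifiable. Using the inclusion
$$\partial F(r)\subset \bigcup_{\s\in\T(r)}\partial \bar{S}_\s\Big(F^\s(r/\bar{r}_\s)\Big)$$
and the finiteness of $\T(r)$, we conclude that $\partial F(r)$ is contained in a finite union of $(\mathcal{H}^{d-1},d-1)$-rectifiable sets, hence is itself $(\mathcal{H}^{d-1},d-1)$-rectifiable. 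Since this holds simultaneously for every $r>0$ on the full-measure set on which \eqref{OSC} and the finiteness of $\T(r)$ hold for all $r$, the lemma follows.

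The only subtle point is the almost-sure finiteness of $\T(r)$ uniformly in $r$; however, for each fixed realization every infinite descending branch has $\bar{r}_\s\to 0$, so the truncation rule $\j\bar{r}_\s\le r<\j\bar{r}_{\s||\s|-1}$ terminates along every branch, and the tree $\T(r)$, being finitely branching at each node with probability $1$ (since $\nu<\infty$ at every node w.p.1), is finite. This is really the only nontrivial ingredient not already phrased in the text; the geometric content of the lemma is completely delivered by Theorem \ref{thm-appendix} and the stopping decomposition.
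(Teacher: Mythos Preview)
Your proof is correct and follows exactly the same approach as the paper: both use the stopping decomposition $F(r)=\bigcup_{\s\in\T(r)}\bar{S}_\s\bigl(F^\s(r/\bar{r}_\s)\bigr)$, invoke Theorem~\ref{thm-appendix} (via $r/\bar{r}_\s\ge R|J|\ge R|F^\s|$) to see that each piece has Lipschitz $(d-1)$-submanifold boundary, and conclude by finiteness of $\T(r)$. You have simply spelled out a few details (the boundary inclusion, why $\T(r)$ is finite) that the paper leaves implicit.
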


(For the deterministic case see Rataj and Winter \cite{RW09}.)\\
Furthermore, we introduce the subset of those words $\s$ in $\T(r)$ for which the set $F_\s(r)=(F\cap J_\s)(r)$ has distance less than $r$ to the {\it boundary} of the first iterate $SJ:=\bigcup_{i\in\T_1}J_i$ of the basic set $J$ under the random similarities:
\begin{equation}\label{subtree-T_b(r)}
\T_b(r):=\left\{\s\in\T(r): F_\s(r)\cap(SJ)^c(r)\ne\emptyset\right\}\, .
\end{equation}

We now can formulate the {\it main theorem} of the paper.\\ Recall the martingale limit $M_\infty=\lim_{n\rightarrow\infty}\sum_{\s\in\T_n}\bar{r}_\s^D$ from \eqref{eq:mart-conv},
the measure\\ $\mu:=\E\left(\sum_{i\in\T_1}\1_{(\cdot)}(|\ln r_i|)\right)$ from \eqref{latt-nonlatt-meas},
and the constant
$\lambda(D)=\E\left(\sum_{i\in\T_1}|\ln r_i|\, r_i^D\right)$ from \eqref{limitconstant}.\\
\begin{thms}\label{maintheorem}
Let $k\in\{0,1,\ldots,d\}$ and $F$ be a self-similar random set with basic space $J$ satisfying the following conditions:
\begin{enumerate}[{\rm (i)}]
\item $1<\E \nu<\infty$,
\item the strong open set condition, i.e., the open set condition \eqref{OSC} and
      $$\P(F\cap \Int(J)\ne\emptyset)>0\, ,$$
\item $F$ is regular in the sense of Definition \ref{def:regular SSRS}, if $k\le d-2$,
\item for $k\le d-2$,
     $$\E\left(\esup_{0<r<\j}\limits\sup_{\s\in\T_b(r)}\, r^{-k}\,C_k^{var}\bigg(F(r),\partial (F_\s(r))\cap\partial\big(\bigcup_{\s'\in\T(r),~ \s'\ne\s}F_{\s'}(r)\big)\bigg)\right)<\infty\, .$$

\end{enumerate}
Set
$$R_k(r):=C_k(F(r))-\sum_{i\in\T_1}\1_{(0,\j r_i]}(r)\, C_k\big(F_i(r)\big)\, ,~~ r>0\, .$$
Then we have the following.
\begin{itemize}
\item[{\rm (I)}]
$$\elim_{\ep\rightarrow 0}\limits\ep^{D-k}\E\big(C_k(F(\ep))\big)=\frac{1}{\lambda(D)}\int_0^{\j} r^{D-k-1}\E\big(R_k(r)\big)\, dr$$
and
$$C_k(F):=\elim_{\ep\rightarrow 0}\limits\ep^{D-k} C_k(F(\ep))=\frac{M_\infty}{\lambda(D)}\int_0^{\j} r^{D-k-1}\E\big(R_k(r)\big)\, dr\quad  w.p.1\, ,$$
provided the measure $\mu$ is non-lattice.
\item[{\rm (II)}]
  $$\lim_{n\rightarrow\infty}e^{(k-D)(s+nc)}\E\left(C_k\big(F(e^{-(s+nc)})\big)\right)=\frac{1}{\lambda(D)}\sum_{m=0}^\infty e^{(k-D)(s+mc)}\E\left(R_k\big(e^{-(s+mc)}\big)\right)$$
   for a.e. $s\in[0,c)$ and
   $$\lim_{n\rightarrow\infty}e^{(k-D)(s+nc)}C_k\big(F(e^{-(s+nc)})\big)=\frac{M_\infty}{\lambda(D)}\sum_{m=0}^\infty e^{(k-D)(s+mc)}\E\left(R_k\big(e^{-(s+mc)}\big)\right)$$
 for a.e. $s\in[0,c)$ w.p.1,\\
 provided the measure $\mu$ has lattice constant $c$.\\
\end{itemize}
\end{thms}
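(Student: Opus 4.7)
\medskip
\noindent\textbf{Proof plan.} The strategy is to recognize that the processes \eqref{curvmeas-renewalprocess}--\eqref{curvmeas-auxprocess} are set up so that Theorem \ref{renewal-branwalk} applies directly to $Z_t$, and then to translate its conclusions back to $\ep$-variables via $\ep = \j\,e^{-t}$. First I would verify the branching representation $Z_t = \sum_{\sigma \in \T} Y^\sigma_{t-W_\sigma}$ by iterating the defining recursion for $Y$ and invoking the stochastic self-similarity \eqref{statselfsim} together with the scaling identity $C_k(F_i(r)) = r_i^k\,C_k(F^i(r/r_i))$. Using the shortcut $(\j)^{-k}e^{kt}=r^{-k}$ with $r=\j\,e^{-t}$, one sees $Z_t = r^{-k}\,C_k(F(r))$ and $Y_t = r^{-k}\,R_k(r)$; the indicator $\1_{(0,\j r_i]}(r)$ appearing in $R_k$ arises from the factor $\1_{[0,\infty)}(t-W_i)$ attached to the shifted copies $Z^i_{t-W_i}$ (since $e^{-(t-W_i)}=r/(\j r_i)$).

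\smallskip

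Next I would verify the two hypotheses of Theorem \ref{renewal-branwalk} for $Y$. Condition \ref{a.e.continuousw.p.1} follows from Corollary \ref{auxcontinuity} in the case $k\le d-2$ (which requires regularity, hypothesis (iii)), from weak continuity of surface measure on rectifiable boundaries for $k=d-1$ (via the rectifiability lemma established just above), and trivially from continuity of Lebesgue measure of parallel sets when $k=d$; joint measurability in $(t,\omega)$ is a consequence of Lemma \ref{lem:measurable-posreach}. The boundedness condition \eqref{boundedness-condition} is the technical heart: in $r$-variables it asks for a nonincreasing integrable $h$ with $\E\big(\esup_{t\ge 0}(\j)^{-D}\,r^{D-k}|R_k(r)|/h(t)\big)<\infty$. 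Writing $F(r)=\bigcup_{i\in\T_1}F_i(r)$ and using the local determination of $C_k$, the additivity defect $C_k(F(r))-\sum_{i\in\T_1}C_k(F_i(r))$ is supported on the pairwise overlap set and is exactly the quantity controlled by hypothesis (iv), once one iterates \eqref{curvmeas-auxprocess} down to the Markov stopping line $\T(r)$ and recognizes $\T_b(r)$ as the indices contributing to genuine overlaps. The ``unsubtracted'' terms $C_k(F_i(r))$ with $r > \j\, r_i$ fall in the regime $r>\sqrt{2}\,|J_i|$ where the Appendix Theorem \ref{thm-appendix} provides a deterministic bound of order $r^k$; multiplied by $r^{-k}$ these give a bounded (and hence integrable, in $t$ with exponential weight) contribution.

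\smallskip

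Once all hypotheses are verified, Theorem \ref{renewal-branwalk} yields the essential (resp.\ lattice) limits of $e^{-Dt}\E(Z_t)$ and $e^{-Dt}Z_t$. The final reconciliation is algebraic: the identity $\ep^{D-k}C_k(F(\ep))=(\j)^{D}\,e^{-Dt}Z_t$ transfers the limits to $\ep$-variables, and the substitution $r=\j\,e^{-s}$, $ds=-dr/r$ turns $\int_0^\infty e^{-Ds}\E(Y_s)\,ds$ into $(\j)^{-D}\int_0^{\j}r^{D-k-1}\E(R_k(r))\,dr$; the powers of $\j$ cancel, delivering (I) in the non-lattice case and, via part (ii) of Theorem \ref{renewal-branwalk}, (II) in the lattice case. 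I expect the main obstacle to be the boundedness step: assumption (iv) is formulated at the stopping line $\T(r)$ rather than at the first generation $\T_1$, so one must carefully iterate the one-step decomposition through several generations to identify the overlap contributions with those controlled by (iv), and then combine this with the Appendix estimate to cover both the small-scale regime $r\le \j\,r_i$ (genuine overlaps) and the intermediate regime $\sqrt{2}|J_i|<r\le \j$ (large parallel-set tails that are not subtracted).
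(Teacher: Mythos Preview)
Your overall architecture matches the paper's: reduce to Theorem \ref{renewal-branwalk} via the substitution $\ep=\j e^{-t}$, verify Condition \ref{a.e.continuousw.p.1} from regularity/rectifiability, split $R_k(r)$ into a ``large-tail'' piece handled by Theorem \ref{thm-appendix} and an ``overlap'' piece, and translate back. That is exactly what the paper does (its $Q_1,Q_5$ are your large-tail terms; its $Q_3,Q_4$ are the overlap terms, localized to $A_r\subset(SJ)^c(r)$ and then pushed to the stopping line $\T(r)$).

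There is, however, a genuine gap in your boundedness step. You write that the additivity defect ``is exactly the quantity controlled by hypothesis (iv)''. But (iv) only bounds the \emph{per-word} contribution
\[
r^{-k}\,C_k^{var}\Big(F(r),\partial F_\s(r)\cap\partial\big(\textstyle\bigcup_{\s'\ne\s}F_{\s'}(r)\big)\Big)
\]
uniformly over $\s\in\T_b(r)$. To control $|R_k(r)|$ (more precisely, $C_k^{var}(F(r),(SJ)^c(r))$) you must \emph{sum} such terms over $\s\in\T_b(r)$, and the resulting bound is of order $r^k\cdot\sharp(\T_b(r))$. Condition \eqref{boundedness-condition} with $h(t)=e^{-\delta t}$ then requires
\[
\E\Big[\sup_{0<r\le\j} r^{D-\delta}\,\sharp(\T_b(r))\Big]<\infty
\]
for some $\delta>0$. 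This counting estimate is not a formality: it is where the strong open set condition (ii) enters, and it is the content of the paper's Lemma \ref{mainlemma}. The argument produces $\delta$ by first finding, from $\P(F\cap\Int(J)\ne\emptyset)>0$, a subtree $\T(\rho,\alpha)\subset\T(\rho)$ with positive probability of being nonempty, so that the defective sum $\E\big[\sum_{\s\in\T(\rho)\setminus\T(\rho,\alpha)}\bar r_\s^{D-\delta}\big]=1$ determines a strictly positive $\delta$; one then shows $\sharp(\T_b(r))$ is dominated (up to constants) by the cardinality of an auxiliary set $\Xi(r)$ of words that avoid all shifted copies of $\T(\rho,\alpha)$, and a renewal-type monotonicity argument gives $\E\big[\sup_r r^{D-\delta}\sharp(\Xi(r))\big]<\infty$. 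Your outline neither invokes (ii) nor indicates any mechanism to bound $\sharp(\T_b(r))$; without it the overlap piece cannot be shown to satisfy \eqref{boundedness-condition}.
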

\begin{remarks}
For $k=d$, i.e. for the (average) {\it Minkowski content} in (I), this theorem is due to Gatzouras \cite{Ga00}.\\
Recall that for $d\le 3$ the regularity (iii) holds always true. For polyconvex neighborhoods this remains valid for general $d$.\\
We conjecture that in the deterministic case the strong open set condition implies (iii). Under the additional assumption of polyconvex neighborhoods (iv) is proved implicitly in the Thesis of Winter \cite{Wi06}, \cite{Wi08}. For many deterministic examples with non-polyconvex neighborhoods, e.g. the Koch curve, the above conditions can be checked using their local structure.\\
For the general deterministic case and $k=d-1$ the limits are derived in Rataj and Winter \cite{RW09}. Moreover, these authors show for the case of non-arithmetic logarithmic contraction ratios the equality
$$ C_{d-1}(F)=(d-D)\, C_d(F)\, .$$
This supports our conjecture that like in the classical smooth case the parameters $C_k(F)$ with $k$ larger than the integer $[D]+1$ do not provide additional geometric information. (In the classical case they are all multiples of the Minkowski content.)
\end{remarks}
\vskip3mm

\begin{cors}\label{mainthm-average}
Under the conditions of Theorem \ref{maintheorem} we get the average limits
$$\lim_{\delta\rightarrow 0}\frac{1}{|\ln\delta|}\int_\delta^1\ep^{D-k}\E\big(C_k(F(\ep))\big)~\ep^{-1}d\ep = \frac{1}{\lambda(D)}\int_0^{\j} r^{D-k-1}\E\big(R_k(r)\big)\, dr$$
and
$$\lim_{\delta\rightarrow 0}\frac{1}{|\ln\delta|}\int_\delta^1\ep^{D-k} C_k(F(\ep))~ \ep^{-1} d\ep=\frac{M_\infty}{\lambda(D)}\int_0^{\j}r^{D-k-1}\E\big(R_k(r)\big)\, dr\quad  w.p.1\, .$$
\end{cors}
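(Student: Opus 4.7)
\textit{Reduction via change of variables.} I reduce the claim to a Ces\`aro-type renewal statement via the substitution $\ep = \j\, e^{-t}$, and then invoke Corollary \ref{averageconv} (in the lattice case) or dominated convergence based on Theorem \ref{maintheorem}(I) (in the non-lattice case). Under $\ep = \j\, e^{-t}$, the definition \eqref{curvmeas-renewalprocess} gives the identity $\ep^{D-k}C_k(F(\ep)) = \j^{\,D} e^{-Dt} Z_t$, and $\ep^{-1}d\ep = -dt$. With $T_\delta := \ln(\j/\delta)$ one has $|\ln\delta|/T_\delta \to 1$ as $\delta\to 0$, hence
$$\frac{1}{|\ln\delta|}\int_\delta^1 \ep^{D-k}\,C_k(F(\ep))\,\ep^{-1}d\ep \;=\; \frac{\j^{\,D}}{|\ln\delta|}\int_{\ln\j}^{T_\delta} e^{-Dt}\, Z_t\, dt,$$
and the bounded contribution from $[0,\ln\j]$ vanishes in the limit. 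So it suffices to analyze $\frac{1}{T}\int_0^T e^{-Dt}Z_t\,dt$ as $T\to\infty$, together with its expectation analog.

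\textit{Applying renewal averaging.} Under the hypotheses of Theorem \ref{maintheorem}, the process $Y_t$ from \eqref{curvmeas-auxprocess} satisfies Condition \ref{a.e.continuousw.p.1} (via Corollary \ref{auxcontinuity} for $k\le d-2$, via $\H^{d-1}$-rectifiability of $\partial F(r)$ for $k=d-1$, trivially for $k=d$) and the boundedness hypothesis \eqref{boundedness-condition}; in the lattice case the same bounds yield \eqref{supremum}, and Corollary \ref{averageconv} delivers
$$\lim_{T\to\infty}\frac{1}{T}\int_0^T e^{-Dt}\, Z_t\, dt \;=\; \frac{M_\infty}{\lambda(D)}\int_0^\infty e^{-Ds}\,\E(Y_s)\, ds\quad \text{w.p.1},$$
with the analogous identity for $\E(Z_t)$. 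In the non-lattice case, Theorem \ref{maintheorem}(I) supplies the essential limit of $e^{-Dt}Z_t$, and the same $L^1$ domination lets dominated convergence upgrade this essential limit to the identical Ces\`aro statement.

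\textit{Identification of the constant.} Motion invariance and $k$-homogeneity of $C_k$, together with the stochastic self-similarity \eqref{statselfsim}, give $C_k(F_i(r)) = r_i^k\, C_k(F^i(r/r_i))$ for $i\in\T_1$. Combined with $W_i = |\ln r_i|$ in \eqref{curvmeas-auxprocess} this yields
$$Y_s \;=\; \j^{-k}\, e^{ks}\, R_k(\j\, e^{-s})\,\1_{[0,\infty)}(s).$$
Substituting $r = \j\, e^{-s}$ in the renewal integral then gives
$$\j^{\,D}\!\int_0^\infty e^{-Ds}\,\E(Y_s)\, ds \;=\; \int_0^{\j} r^{D-k-1}\,\E(R_k(r))\, dr,$$
matching the constant in the corollary; combined with the reduction step this completes the proof. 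The main obstacle is hidden in the proof of Theorem \ref{maintheorem}: the verification of \eqref{boundedness-condition} (and of \eqref{supremum} in the lattice case) for the rescaled random curvatures relies crucially on condition (iv) and on the Appendix estimate for large parallel sets. Once Theorem \ref{maintheorem} is in hand, the corollary follows immediately from the two changes of variables above.
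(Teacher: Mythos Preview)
Your proof is correct and follows exactly the route the paper leaves implicit: the paper gives no separate proof of this corollary, but the substitution $\ep=\j\,e^{-t}$, $r=\j\,e^{-s}$ introduced at the beginning of the proof of Theorem~\ref{maintheorem} reduces it to Corollary~\ref{averageconv} (with the non-lattice case being trivial since the essential limit already exists). Your verification that the hypotheses of Corollary~\ref{averageconv}---in particular \eqref{supremum}---are met is a bit terse (boundedness of $e^{-Dt}\E|Z_t|$ follows from the discounted renewal equation for $e^{-Dt}\E\sum_{\s\in\T}|Y^\s_{t-W_\s}|$ once \eqref{boundedness-condition} is known), but this is exactly the level of detail the paper itself offers.
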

{\bf The notion of fractal curvatures.} In the non-lattice case these average limits agree with the ordinary limits from Theorem \ref{maintheorem} (I). Due to the stochastic self-similarity the randomness of the second limit appears only in form of the random variable $M_\infty$ which does not depend on $k$. In view of Biggin's theorem \ref{Biggins}, $M_\infty$ does not vanish with positive probability if and only if $$\mathbb{E}\bigg[\bigg(\sum_{i\in\T_1}r_i^D\bigg)\, \ln^+\bigg(\sum_{i\in\T_1}r_i^D\bigg)\bigg]<\infty$$
and in this case $\mathbb{E}M_\infty=1$. Then the expectation of the second limit agrees with the first limit, i.e. with the limit of the expectations. The integral expression provides a formula for numerical calculations in some special situations. Examples for the deterministic case may be found in Winter \cite{Wi08}.\\
In view of the classical notions the second limit in Corollary \ref{mainthm-average} will be called {\it random fractal Lipschitz-Killing curvature of order k} of the self-similar random set $F$, if it is not zero. \\
If the first limit vanishes, one has to check the correctness of the choice of the rescaling exponent $D-k$ in order to keep the curvature interpretation. In 'non-fractal' situations the exponent has to be replaced by $0$. For a detailed discussion of this problem see \cite{Wi08}. (Perhaps the rescaling exponents can be used in order to distinguish between 'fractal' and 'non-fractal' self-similar sets.)\\

\section{Proofs of the main results}
\subsection{Proof of Theorem \ref{maintheorem}}\label{ssec:proof-main-thm}
Recall that we wish to reduce the convergence assertions in (I) and (II) to the above renewal theorem.
Substituting $\ep:=\j e^{-t}$ and $r:=\j e^{-s}$ under the integral we obtain the equivalent limit relationships in Theorem \ref{renewal-branwalk} for the above introduced processes
\begin{eqnarray*}
Z_t & = & (\j)^{-k}e^{kt}\, C_k(F(\j e^{-t}))\1_{[0,\infty)}(t)\\
Y_t & = & Z_t - \sum_{i\in \T_1} Z_{t-W_i}
\end{eqnarray*}
with $W_i=|\ln r_i|$. (The use of the constant $\j$ will be clear later.) Therefore it suffices to check the conditions on the process $Y$.\\
The measurability and continuity properties in Condition \ref{a.e.continuousw.p.1} for the process $Y$ follow from its definition together with Lemmas \ref{lem:measurable-posreach} and \ref{lem:contcurvmeas}, and Corollary \ref{auxcontinuity}.\\
For (\ref{boundedness-condition}) it is sufficient to find some $\delta>0$ such that
$$\mathbb{E}\left[\esup_{t\ge 0}\limits\nolimits\limits\left(\frac{e^{-Dt}|Y_t|}{e^{-\delta t}}\right)\right]<\infty\, .$$
According to the above substitution this may be reformulated as
\begin{equation}\label{Q-boundedness}
\mathbb{E}\left[\esup_{0<r\le\j}\limits\left(\frac{|Q(r)|}{r^{k-D+\delta}}\right)\right]<\infty\, ,
\end{equation}
where
\begin{eqnarray*}
Q(r)&:=&C_k(F(r))-\sum_{i\in\T_1}r_i^k\, \1_{(0,\j]}\big(\frac{r}{r_i}\big)\, C_k\bigg(F^i\big(\frac{r}{r_i}\big)\bigg)\\
&=&C_k(F(r))-\sum_{i\in\T_1}\1_{(0,\j r_i]}(r)\, C_k\big(F_i(r)\big)\, =:\, R_k(r)
\end{eqnarray*}
The equality between $Q(r)$ (where the order $k$ of the curvature is suppressed in the notation) and $R_k(r)$ (which stands in the assertion of the theorem) follows from the scaling property of $C_k$.
(Recall that $F_i=F\cap J_i$, $J_i=S_iJ$, $F^i=S_i^{-1}(F)\cap J$, and $F(r)\subset\bigcup_{i\in\T_1}J_i(r)$.)\\
We decompose $Q(r)$ into
\begin{eqnarray*}
Q(r) & = & \left(C_k(F(r))-\sum_{i\in\T_1}r_i^k\, C_k\bigg(F^i\big(\frac{r}{r_i}\big)\bigg)\right)\\
&  & + \sum_{i\in\T_1}r_i^k\, \1_{(\j,\infty)}\big(\frac{r}{r_i}\big)\, C_k\bigg(F^i\big(\frac{r}{r_i}\big)\bigg)\\
& =: & Q_2(r)+Q_1(r)\, .
\end{eqnarray*}
Using that the $F^i$, $i\in\T_1$, are independent of $\T_1$ and have the same distribution as $F$ we obtain for $Q_1$ the estimate
$$\mathbb{E}\left[\esup_{0<r\le\j}\limits\left(\frac{|Q_1(r)|}{r^k}\right)\right]\le
\mathbb{E}\nu\, \mathbb{E}\left[\esup_{r>\j}\limits\left(\frac{|C_k(F(r))|}{r^k}\right)\right]\, .$$
According to Theorem \ref{thm-appendix} in the Appendix the last expression is finite. Therefore it remains to prove that
\begin{equation}\label{estimateQ_2}
\mathbb{E}\left[\esup _{0<r\le \j}\limits\left(\frac{|Q_2(r)|}{r^{k-D+\delta}}\right)\right]<\infty\,
\end{equation}
for some $\delta>0$. By the scaling property of $C_k$ we get
\begin{equation}\label{representationQ_2}
Q_2(r)=C_k(F(r))-\sum_{i\in\T_1}C_k\big(F_i(r)\big)\, .
\end{equation}

Next we decompose the total $k$th curvatures by means of the corresponding curvature measures:
\begin{eqnarray*}
C_k(F(r)) & = & C_k\bigg(F(r),\bigcup_{i\in\T_1}F_i(r)\bigg) = C_k\big(F(r), A_r\cup(A_r)^c\big)\\
& = & C_k(F(r),A_r)+C_k\big(F(r), (A_r)^c\big)\, ,
\end{eqnarray*}
where
\begin{equation}
A_r:=\bigcup_{j\ne k} J_j(r)\cap J_k(r)\, .
\end{equation}
Similarly,
$$C_k(F_i(r))= C_k(F_i(r),A_r)+C_k\big(F_i(r), (A_r)^c\big)\, ,\quad i\in\T_1\,. $$
The locality of the curvature measure $C_k$ implies
$$C_k(F_i(r),(A_r)^c)=C_k(F_i(r),B^i)=C_k(F(r),B^i)$$
and $F(r)\cap(A_r)^c$ is the disjoint union of the sets $B^i:=F_i(r)\setminus A_r$,\, $i\in\T_1$. Hence,
$$C_k(F(r),(A_r)^c))-\sum_{i\in\T_1}C_k(F_i(r),(A_r)^c)=0\, .$$
Substituting this in (\ref{representationQ_2}) we infer
$$Q_2(r)= C_k(F(r),A_r)-\sum_{i\in\T_1}C_k(F_i(r),A_r)$$
and by the scaling property of $C_k$ from this
\begin{eqnarray*}
Q_2(r) & =  & C_k(F(r),A_r)-\sum_{i\in\T_1}r^k_i\, C_k\bigg(F^i\big(\frac{r}{r_i}\big),S_i^{-1}(A_r)\bigg)\\
& = & Q_3(r)-Q_4(r)-Q_5(r)
\end{eqnarray*}
for
\begin{eqnarray*}
Q_3(r)& := & C_k(F(r),A_r)\\
Q_4(r)& := & \sum_{i\in\T_1}r_i^k\, \1_{(0,\j]}\big(\frac{r}{r_i}\big)\, C_k\bigg(F^i\big(\frac{r}{r_i}\big),S_i^{-1}(A_r)\bigg)\\
Q_5(r)& := & \sum_{i\in\T_1}r_i^k\, \1_{(\j,\infty)}\big(\frac{r}{r_i}\big)\, C_k\bigg(F^i\big(\frac{r}{r_i}\big),S_i^{-1}(A_r)\bigg)\, .
\end{eqnarray*}
Therefore it suffices to prove the estimate (\ref{estimateQ_2}) for $Q_3$, $Q_4$, and $Q_5$ instead of $Q_2$ separately.\\
First we obtain
$$\mathbb{E}\left[\esup _{0<r\le \j}\limits\left(\frac{|Q_5(r)|}{r^k}\right)\right]<\infty\, .$$
Here the arguments are the same as for $Q_1$ above taking into regard that for any Borel set $B$, $|C_k(F,B)|\le C_k^{var}(F,\mathbb{R}^d)$ and applying Theorem \ref{thm-appendix} in the Appendix.\\
For estimating $Q_3$ and $Q_4$ we we will use the set inclusions
$$A_r\subset (SJ)^c(r)\, ,\quad S_i^{-1}(A_r)\cap F^i\big(\frac{r}{r_i}\big)\subset J^c\big(\frac{r}{r_i})\, ,\quad {\rm and}~~ J^c(r)\subset(SJ)^c(r)\, ,$$
(Recall that $SJ=\bigcup_{i\in\T_1}S_iJ$ and $J$ is from OSC.) Then for $Q_3$ the estimate follows from (\ref{estimatemainlemma}) in Lemma \ref{mainlemma} below.\\
Finally, using once more that the sets $F^i$, $i\in\T_1$, are independent of $\T_1$ and have the same distribution as $F$, we infer for the same $\delta$ as above
\begin{eqnarray*}
\mathbb{E}\left[\esup _{0<r\le \j}\limits \left(\frac{|Q_4(r)|}{r^{k-D+\delta}}\right)\right]
& \le & \mathbb{E}\left[\sum_{i\in\T_1}r_i^{D-\delta}\esup_{0<\frac{r}{r_i}\le\j}\limits\left( \frac{r_i^{k-D+\delta}}{r^{k-D+\delta}}~ C_k^{var}\bigg(F^i\big(\frac{r}{r_i}\big), J^c\big(\frac{r}{r_i}\big)\bigg)\right)\right]\\
& \le &
\mathbb{E}\nu~ \mathbb{E}\left[\esup_{0<r\le \j}\limits\left(\frac{C_k^{var}\big(F(r), J^c(r)\big)}{r^{k-D+\delta}}\right)\right]\\
& \le &
\mathbb{E}\nu~ \mathbb{E}\left[\esup_{0<r\le \j}\limits\left(\frac{C_k^{var}\big(F(r), (SJ)^c(r)\big)}{r^{k-D+\delta}}\right)\right]
<\infty
\end{eqnarray*}
according to (\ref{estimatemainlemma}).\hfill $\Box$\\

\begin{lems}\label{mainlemma} Under the conditions of Theorem \ref{maintheorem}
we have
\begin{equation}\label{estimatemainlemma}
\mathbb{E}\left[\esup _{0<r\le \j}\limits\left(\frac{C_k^{var}\big(F(r),(SJ)^c(r)\big)}{r^{k-D+\delta}}\right)\right]<\infty
\end{equation}
for some $0<\delta<D$.
\end{lems}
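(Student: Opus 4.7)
The strategy is to reduce the claimed bound to three ingredients: the scaling identity for $C_k$, the Appendix estimate for curvatures of parallel sets at large distance, and hypothesis (iv) for the interface terms. Fix $r\in(0,\j]$ and apply the Markov stopping $\T(r)$ from \eqref{subtree-T(r)}, so that $F(r)=\bigcup_{\s\in\T(r)}F_\s(r)$. By the very definition \eqref{subtree-T_b(r)} of $\T_b(r)$, any cell $\s\in\T(r)\setminus\T_b(r)$ satisfies $F_\s(r)\cap(SJ)^c(r)=\emptyset$ and therefore contributes nothing to $C_k^{var}(F(r),(SJ)^c(r))$. Using the locality and additivity of the Lipschitz--Killing curvature measures on the decomposition $F(r)=\bigcup_\s F_\s(r)$, I would split $C_k(F(r),\cdot)$ into local contributions from the individual $F_\s(r)$ (on the open set where $F(r)$ coincides with $F_\s(r)$) plus singular contributions supported on the interfaces $\partial F_\s(r)\cap\partial F_{\s'}(r)$, $\s\ne\s'$. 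Passing to variations yields
$$C_k^{var}\bigl(F(r),(SJ)^c(r)\bigr)\le\sum_{\s\in\T_b(r)}C_k^{var}\bigl(F_\s(r),\rd\bigr)+\sum_{\s\in\T_b(r)}C_k^{var}\Bigl(F(r),\,\partial F_\s(r)\cap\partial\bigcup_{\s'\in\T(r),\ \s'\ne\s}F_{\s'}(r)\Bigr).$$

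For each summand of the first sum, scaling gives $C_k^{var}(F_\s(r),\rd)=\bar{r}_\s^k\,C_k^{var}(F^\s(r/\bar{r}_\s),\rd)$; since $\s\in\T(r)$ forces $r/\bar{r}_\s\ge\j>\sqrt{2}\,|J|$, Theorem \ref{thm-appendix} in the Appendix bounds $C_k^{var}(F^\s(r/\bar{r}_\s),\rd)$ by $\const\cdot(r/\bar{r}_\s)^k$ uniformly in $\s$ and $\omega$, giving $\const\cdot r^k$ per summand. For the second sum, hypothesis (iv) of Theorem \ref{maintheorem} bounds each summand by $r^k\cdot Y$, where $Y$ is the single random variable (independent of $r$ and $\s$) appearing inside the expectation of (iv), with $\E Y<\infty$. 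Combining,
$$C_k^{var}\bigl(F(r),(SJ)^c(r)\bigr)\le(\const+Y)\cdot r^k\cdot|\T_b(r)|,$$
so it will suffice to prove $\E\bigl[(\const+Y)\cdot\esup_{0<r\le\j}r^{D-\delta}|\T_b(r)|\bigr]<\infty$ for some $\delta\in(0,D)$.

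The main obstacle left is the purely geometric/probabilistic estimate $\E\,\esup_{0<r\le\j}r^{D-\delta}|\T_b(r)|<\infty$ for some $\delta>0$, and this is where the strong open set condition (ii) enters in an essential way. The elements of $\T_b(r)$ are those stopped cells $\s\in\T(r)$ whose $J_\s$ lies within distance $\lesssim r$ of the piecewise Lipschitz set $\partial(SJ)$, itself a finite union of similar images of pieces of $\partial J$. Under (ii), the trace $F\cap\partial J$ has Hausdorff dimension strictly smaller than $D$; a standard Galton--Watson / branching-random-walk argument applied to the sub-process consisting of cells near $\partial J$ then yields $\E\sum_{\s\in\T_b(r)}\bar{r}_\s^D\le C\,r^\delta$ for some $\delta>0$. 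Since $\bar{r}_\s$ is comparable to $r$ on $\T(r)$, this translates into the required polynomial expected bound on $|\T_b(r)|$. The passage from the expectation to the essential supremum in $r$ is routine because $\T(r)$, and hence $\T_b(r)$, is a right-continuous piecewise constant function of $r$, so the $\esup$ in $r$ reduces to a countable supremum over the jump times.
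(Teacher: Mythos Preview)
Your reduction to a counting bound is sound and parallels the paper: restrict to cells $\s\in\T_b(r)$, bound each cell's own curvature $C_k^{var}(F_\s(r))$ deterministically via scaling and Theorem~\ref{thm-appendix}, and control the interface pieces by hypothesis~(iv). The paper carries out exactly this decomposition (with only cosmetic differences in how the per--cell estimate is phrased), arriving at the same requirement~\eqref{estimatenumberT_b}.

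Where your argument fails is the counting estimate itself, which is in fact the substance of the lemma. Your claim that passing from a pointwise bound $\E\bigl[r^{D-\delta}\,\sharp\T_b(r)\bigr]\le C$ to $\E\bigl[\esup_{r}r^{D-\delta}\,\sharp\T_b(r)\bigr]<\infty$ is ``routine because $\T_b(r)$ is piecewise constant'' is wrong: reducing the essential supremum to a countable supremum over jump times buys nothing, since one cannot interchange expectation and a countable supremum in general (a family with uniformly bounded means can have an a.s.\ infinite supremum). Your invocation of ``$\dim_H(F\cap\partial J)<D$'' is also not what the strong open set condition says; SOSC only gives $\P(F\cap\Int J\neq\emptyset)>0$. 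The paper's proof of \eqref{estimatenumberT_b} is a genuine self--similarity recursion: one uses SOSC to find $\rho,\alpha$ with $\P(\T(\rho,\alpha)\neq\emptyset)>0$ (cells bounded away from $\partial J$), defines $\delta$ by the subcriticality equation $\E\sum_{\s\in\T(\rho)\setminus\T(\rho,\alpha)}\bar r_\s^{D-\delta}=1$, introduces the tree $\Xi(r)$ of words in $\T(r)$ whose subwords never land in a shifted $\T(\rho,\alpha)$, bounds $\sharp\T_b(r)$ by a constant times $\sharp\Xi(r^*)$, and then exploits the recursion $\sharp\Xi(r)\le\sum_{\s\in\T(\rho)\setminus\T(\rho,\alpha)}\sharp\Xi^\s(r/\bar r_\s)$ together with the independence of the $\Xi^\s$ from the stopping to show that $\Psi(r'):=\E\bigl[\max(\sup_{r>r'}r^{D-\delta}\sharp\Xi(r),\,C)\bigr]$ satisfies $\Psi(r')\le\Psi(\theta r')$ for some $\theta>1$, hence is constant in $r'$ and therefore finite. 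This recursion is precisely what upgrades the pointwise estimate to a uniform one; it is the core of the lemma and cannot be replaced by a measurability remark.
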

\begin{proof}
We first proceed similarly as in the above  proof choosing the subtree  $\T(r)$ from \eqref{subtree-T(r)} instead of $\T_1$ in the decomposition of the curvature measures.
Recall that $F(r)=\bigcup_{\s\in\T(r)}F_\s(r)$
for any $r>0$. Therefore the locality of the curvature measures implies
\begin{equation}\label{auxestimate}
C_k^{var}\big(F(r), (SJ)^c(r)\big)=C_k^{var}\bigg(F(r), \big(\bigcup_{\s\in\T(r)}F_\s(r)\big)\cap(SJ)^c(r)\bigg)
\le\sum_{\s\in\T_b(r)}C_k^{var}(F(r), F_\s(r))\, ,
\end{equation}
where the subtree $\T_b(r)$ is defined in (\ref{subtree-T_b(r)}). For $k\in\{d-1,d\}$ we can use the (in)equality \begin{equation}\label{auxFs}
C_k^{var}\big(F(r),F_\s(r)\big)\le C_k\big(F_\s(r)\big)\, ,~~\s\in\T_b(r)\, .
\end{equation}
Furthermore, using that the curvature measures are concentrated on the boundary of $F(r)$ we obtain for any $\s\in\T_b(r)$ and $k\le d-2$,
\begin{eqnarray*}
& &C_k^{var}(F(r),F_\s(r))\\
&=&C_k^{var}\bigg(F(r),F_\s(r)\setminus\bigcup_{\s'\in\T(r),\, \s'\ne\s}F_{\s'}(r)\bigg)
+C_k^{var}\bigg(F(r),F_\s(r)\cap\bigcup_{\s'\in\T(r),\, \s'\ne\s}F_{\s'}(r)\bigg)\\
&=&C_k^{var}\bigg(F(r),F_\s(r)\setminus\bigcup_{\s'\in\T(r),\, \s'\ne\s}F_{\s'}(r)\bigg)+C_k^{var}\bigg(F(r),\partial F_\s(r)\cap\partial\big(\bigcup_{\s'\in\T(r),\, \s'\ne\s}F_{\s'}(r)\big)\bigg)\\
&\le& C_k^{var}(F_\s(r))+C_k^{var}\bigg(F(r),\partial F_\s(r)\cap\partial\big(\bigcup_{\s'\in\T(r),\, \s'\ne\s}F_{\s'}(r)\big)\bigg)\\
&=&\bar{r}^k_\s\,  C_k^{var}\bigg(F^\s\big(\frac{r}{\bar{r}_\s}\big)\bigg)+ C_k^{var}\bigg(F(r),\partial F_\s(r)\cap\partial\big(\bigcup_{\s'\in\T(r),\, \s'\ne\s}F_{\s'}(r)\big)\bigg).
\end{eqnarray*}
Recall that the sets $F^\s=S_\s^{-1}(F)\cap J$, $\s\in\T_b(r)$, are independent of the subtree $\T_b(r)$ and have the same distribution as $F$. Moreover, for $\s\in\T_b(r)$ we have $\frac{r}{\bar{r}_\s}\ge\j$. Therefore the above estimates yield
\begin{eqnarray*}
\mathbb{E}\left[\esup_{0<r\le\j}\limits\sup_{\s\in\T_b(r)}\left(r^{-k}\, C_k^{var}\big(F(r),F_\s(r)\big)\right)\right]
\le\mathbb{E}\left[\esup_{r\ge\j}\limits\left(r^{-k}\, C_k^{var}\big(F(r)\big)\right)\right]\\
+\mathbb{E}\left[\esup_{0<r\le\j}\limits\sup_{\s\in\T_b(r)}\left(r^{-k}\, C_k^{var}\bigg(F(r),\partial F_\s(r)\cap\partial\big(\bigcup_{\s'\in\T(r),\, \s'\ne\s}F_{\s'}(r)\big)\bigg)\right)\right]\, .\\
\end{eqnarray*}
The first summand on the right side is bounded by Theorem \ref{thm-appendix} in the Appendix, since $|J|\ge|F|$. (This holds also for $k\in\{d-1,d\}$.) The boundedness of the second summand is assumption (iv) in Theorem \ref{maintheorem}. In view of this, (\ref{auxestimate}) and (\ref{auxFs})
it suffices now to show that
\begin{equation}\label{estimatenumberT_b}
\mathbb{E}\bigg[\sup_{0<r\le\j}r^{D-\delta}\, \sharp(\T_b(r))\bigg]<\infty
\end{equation}
for some $0<\delta<D$, where $\sharp$ denotes the number of elements of a finite set.\\
For we use some ideas from the deterministic case (cf. \cite{Wi08}) and show a probabilistic version for our random subtrees $\T_b(r)$:\\
By the strong open set condition  $\mathbb{P}(F\cap\Int(J)\ne\emptyset)>0$ there exist some $\alpha>0$ and $0<\r<\j$ such that for the subtree
\begin{\eq}\label{defT(rhoalpha)}
\T(\r,\alpha):=\left\{\s\in\T(\r):\, d(x,\partial J)>\alpha\, , ~ x\in F_\s\right\}
\end{\eq}
we have
\begin{\eq}\label{Trhoalphanotempty}
\mathbb{P}(\T(\r,\alpha)\ne\emptyset)>0\, .
\end{\eq}
The Markov stopping property of the subtree $\T(\r)$ of $\T$ implies
$$\mathbb{E}\bigg[\sum_{\s\in\T(\r)}\bar{r}^D_\s\bigg] = 1$$
for the Hausdorff dimension D. Hence, there is a unique $0<\delta<D$ such that
\begin{\eq}\label{defdelta}
\mathbb{E}\bigg[\sum_{\s\in\T(\r)\setminus\T(\r,\alpha)}\bar{r}_\s^{D-\delta}\bigg] = 1\, .
\end{\eq}
For $r>0$ denote
$$\Xi(r):= \left\{\s=\s_1\ldots\s_n\in\T(r):\, \s_{k+1}\s_{k+2}\ldots\s_l\notin \T^{\s_1\ldots\s_k}(\r,\alpha)~\mbox{for any}~ 1\le k<l\le n\right\}\, .$$
(Recall the notation $X^\s(\o)=X(\theta_\s\o)$ for a random element $X$.)\\
As in the deterministic case (see \cite{Wi08}, proof of Lemma 5.4.1, Part I, where the polyconvex setting is not needed) one shows that
$$\sharp(\T_b(r))\le\sum_{i=1}^\nu \sharp\big(\Xi^i(r^*)\big)$$
where $r^*:=2(\alpha r_{min}\j)^{-1}r$. Consequently,
$$\mathbb{E}\bigg[\sup_{0<r<\j}\bigg(r^{d-\delta}\, \sharp(\T_b(r))\bigg)\bigg]\le\mathbb{E}\nu\,
\mathbb{E}\bigg[\sup_{r>0}\bigg(r^{d-\delta}\, \sharp(\Xi(r^*))\bigg)\bigg]\, $$
since the $\Xi^i$ have the same distribution as $\Xi$ and are independent of $\nu$.\\
Thus it is sufficient to prove that
\begin{\eq}\label{estimatenumberXi}
\mathbb{E}\bigg[\sup_{r>0}\bigg(r^{D-\delta}\, \sharp(\Xi(r))\bigg)\bigg]<\infty\, .
\end{\eq}
Because of the open set condition on the (deterministic) basic set $J$ for $r\ge\r'>0$ the number of its smaller copies under the random similarities of size of order $r$ and hence, the number of elements of $\T(r)$, is uniformly bounded. (Use a volume comparing argument for disjoint open balls, one in each copy, of the same radius.) Since $\sharp(\Xi(r))\le\sharp(\T(r))$ we obtain for any $0<\r'<\r$ a constant $C>0$ such that
$$\sup_{r\ge\r'}\bigg (r^{D-\delta}\sharp(\Xi(r))\bigg)\le C~~~{\rm w.p.1}$$
and from this
\begin{\eq}\label{numberXi}
\sup_{r>0}\bigg(r^{D-\delta}\sharp(\Xi(r))\bigg)\le\max\bigg(\sup_{0<r<\r'}\bigg(r^{D-\delta}\sharp(\Xi(r))\bigg), C\bigg)~~~{\rm w.p.1}\, .
\end{\eq}
On the other hand, for $0<r<\r$ the definition of the random set $\Xi(r)$ implies
$$\sharp(\Xi(r))\le \sum_{\s\in\T(\r)\setminus\T(\r,\alpha)}\sharp\bigg(\Xi^\s\big(\frac{r}{\bar{r}_\s}\big)\bigg)\, .
$$
Combining this with (\ref{numberXi}) for $\r':=\r$ we infer for any $0<r'\le\r$,
\begin{eqnarray*}
\Psi(r')&:= &\mathbb{E}\bigg[\max \bigg(\sup_{r'<r}\bigg(r^{D-\delta}\, \sharp(\Xi(r))\bigg), C\bigg)\bigg]\\
&= &\mathbb{E}\bigg[\max \bigg(\sup_{r'<r<\r}\bigg(r^{D-\delta}\, \sharp(\Xi(r))\bigg), C\bigg)\bigg]\\
&\le&\mathbb{E}\left[\max\left(\sup_{r'<r<\r}\left(\sum_{\s\in\T(\r)\setminus\T(\r,\alpha)}\bar{r}^{D-\delta}_\s
\frac{r^{D-\delta}}{\bar{r}^{D-\delta}_\s}\, \sharp\bigg(\Xi^\s\big(\frac{r}{\bar{r}_\s}\big)\bigg)\right), C\right)\right]\\
&\le&\mathbb{E}\left[\sum_{\s\in\T(\r)\setminus\T(\r,\alpha)}\bar{r}^{D-\delta}_\s
\max\bigg(\sup_{r'<r<\r}\bigg(\frac{r^{D-\delta}}{\bar{r}^{D-\delta}_\s}\, \sharp\bigg(\Xi^\s\big(\frac{r}{\bar{r}_\s}\big)\bigg)\bigg), C\bigg)\right]\\
&\le&\mathbb{E}\left[\sum_{\s\in\T(\r)\setminus\T(\r,\alpha)}\bar{r}^{D-\delta}_\s
\max\bigg(\sup_{r'<r}\bigg(\frac{r^{D-\delta}}{\bar{r}^{D-\delta}_\s}\, \sharp\bigg(\Xi^\s\big(\frac{r}{\bar{r}_\s}\big)\bigg)\bigg), C\bigg)\right]\\
&\le&\mathbb{E}\left[\sum_{\s\in\T(\r)\setminus\T(\r,\alpha)}\bar{r}^{D-\delta}_\s
\max\bigg(\sup_{r'\theta<r}\bigg(r^{D-\delta}\sharp(\Xi^\s(r))\bigg), C\bigg)\right]\\
& =& \mathbb{E}\left[\max\bigg(\sup_{r'\theta<r}\bigg(r^{D-\delta}\sharp(\Xi(r))\bigg), C\bigg)\right]= \Psi(r'\theta)
\end{eqnarray*}
with $\theta:=\r^{-1}\j>1$, since for $\s\in\T(\r)$ we have $\j\bar{r}_\s\le\r$, the random numbers $\sharp(\Xi^\s(r))\, ,~0<r<\r$, are independently of $\T(\r)\setminus\T(\r,\alpha)$  distributed as $\sharp(\Xi(r))$, and
$$\mathbb{E}\left[\sum_{\s\in\T(\r)\setminus\T(\r,\alpha)}\bar{r}^{D-\delta}_\s\right]=1$$
according to (\ref{defdelta}).
By monotonicity of the function $\Psi(r')$ we infer for any $0<r'<\r$, $\Psi(r')=\Psi(\theta r')$ and hence, $\lim_{r\rightarrow 0}\Psi(r)=\Psi(r')$,  which leads together with (\ref{numberXi}) to  assertion (\ref{estimatenumberXi}).
\end{proof}
\section{Appendix - An estimate for Lipschitz-Killing curvature measures of large parallel sets}
Here we will show that the variation of the $k$ th  Lipschitz-Killing curvature of the parallel set of amount r of an arbitrary compact set $K$ for sufficiently large $r$ is bounded from above by a constant multiple of $r^k$:\\

\begin{thm}\label{thm-appendix}
For any $R>\sqrt{2}$ and $k=0,1,\ldots,d$ there exists a constant $c_k(R)$ such that for any compact set $K\subset\mathbb{R}^d$ we have for any $r\ge R|K|$,
$$\rea(\widetilde{K(r)})\ge|K|\sqrt{R^2-1}\, ,$$
$\partial K(r)$ is a $(d-1)$-dimensional Lipschitz submanifold, and
$$\sup_{r\ge R|K|}\frac{C_k^{var}(K(r),\rd)}{r^k}\le c_k(R)\, .$$
\end{thm}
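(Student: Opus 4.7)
The plan is to prove the three assertions in sequence. The geometric core is a Chebyshev-type tangent-ball construction that yields the quantitative reach bound; once positive reach is in hand, the Lipschitz submanifold structure follows from the Clarke-regularity of the distance function $d_K$, and the $r^k$ bound on $C_k^{var}$ combines the reach estimate with a surface-area bound via the normal-cycle representation at the end of Section~\ref{sec:classcurv}.

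Fix $y\in\partial K(r)$ and let $\Sigma(y):=\{p\in K:|y-p|=r\}$ be the set of metric projections of $y$ onto $K$. Since $\Sigma(y)\subset K$ has diameter at most $|K|$, Jung's theorem places it inside a closed Chebyshev ball $\overline B(c_y,\rho(y))$ with $\rho(y)\le\sqrt{d/(2d+2)}\,|K|<|K|/\sqrt 2$. Combining $|y-p|=r$ with $|p-c_y|\le\rho(y)$ yields $|y-c_y|\in[r-\rho(y),r+\rho(y)]$; put $v_y:=(y-c_y)/|y-c_y|$. A direct expansion shows, for every $p\in\Sigma(y)$,
$$\langle(y-p)/r,\,v_y\rangle\ge\frac{|y-c_y|-\rho(y)}{r}\ge 1-\frac{2\rho(y)}{r}\ge 1-\frac{\sqrt 2}{R}>0,$$
so the Clarke generalized gradient $\partial d_K(y)=\operatorname{conv}\{(y-p)/r:p\in\Sigma(y)\}$ is disjoint from the origin. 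Hence $r$ is a regular value of $d_K$, and \cite[Theorem 4.1]{Fu85} together with \cite[Proposition 3]{RZ03} gives that $\widetilde{K(r)}$ has positive reach and that $\partial K(r)$ is a $(d-1)$-dimensional Lipschitz submanifold.

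For the quantitative reach estimate I would show that the closed tangent ball $\overline B(y-sv_y,s)$ is contained in $K(r)$ for every $s\le\sqrt{r^2-\rho(y)^2}$. Writing a general point as $z=y-sv_y+u$ with $|u|\le s$ and decomposing $u=u_\parallel v_y+u_\perp$, one assigns to $u$ a direction-dependent $p(u)\in\Sigma(y)$ by a Voronoi rule matching the direction of $u_\perp$ with the direction of $p-c_y$ inside $\overline B(c_y,\rho(y))$, and then expanding $|z-p(u)|^2$ in the components $u_\parallel,u_\perp$ together with the identity $|y-p(u)|=r$ produces $|z-p(u)|\le r$, i.e. $z\in\overline B(p(u),r)\subset K(r)$. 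A single nearest point $p^*\in\Sigma(y)$ would only give a tangent ball of radius $r$ in its own normal direction $-(y-p^*)/r$; obtaining the common direction $-v_y$ requires using the full spread of $\Sigma(y)$. Since $\rho(y)\le|K|$ and $r\ge R|K|$, the estimate simplifies to $\rea(\widetilde{K(r)})\ge\sqrt{r^2-|K|^2}\ge r\sqrt{1-1/R^2}$, with the minimum value $|K|\sqrt{R^2-1}$ realized at $r=R|K|$.

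For the curvature bound I would combine the linear reach estimate with two further ingredients. First, since $K\subset\overline B(c_0,\rho_0)$ for the Jung centre $c_0$ of $K$ and $r\ge\rho_0$, the set $K(r)$ is star-shaped with respect to $c_0$ and its boundary is a radial graph $\{c_0+R(\theta)\theta:\theta\in S^{d-1}\}$ with $R(\theta)\in[r-\rho_0,r+\rho_0]$. The half-space inequality from the second paragraph, applied with $c_0$ in place of $c_y$, bounds the angle $\alpha$ between the outward unit normal of $K(r)$ and the radial direction by $\arccos(1-\sqrt 2/R)<\pi/2$; the surface-area element $R^{d-1}\sec\alpha$ is then pointwise controlled, so $\H^{d-1}(\partial K(r))\le c_1(R)\,r^{d-1}$. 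Second, the normal-cycle representation at the end of Section~\ref{sec:classcurv}, combined with the standard bound of the generalized principal curvatures of a set of reach $\ge\rho$ by $1/\rho$, controls the Lipschitz--Killing integrand pointwise by $c(d,k)\,\rea(\widetilde{K(r)})^{-(d-1-k)}$. Together this gives
$$C_k^{var}(K(r),\rd)\le c(d,k)\,\rea(\widetilde{K(r)})^{-(d-1-k)}\,\H^{d-1}(\partial K(r))\le c_k(R)\,r^k.$$
The technical heart is the Voronoi-type tangent-ball argument: extracting the sharp common-direction radius $\sqrt{r^2-\rho(y)^2}$ requires a delicate direction-matching between the displacement $u$ and the choice of $p(u)\in\Sigma(y)$, whereas the Lipschitz submanifold assertion and the curvature bound are then relatively standard consequences of positive reach combined with the star-shaped surface-area control.
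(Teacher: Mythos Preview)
Your treatment of the first two assertions is workable in spirit, though the tangent-ball construction for the reach bound is more elaborate than necessary and, as written, incomplete: producing an interior ball in the \emph{single} direction $-v_y$ does not by itself give a lower bound on $\rea(\widetilde{K(r)})$, since reach requires the tangent-ball condition in \emph{every} outward normal direction at $y$. The paper simply quotes the inequality $\rea(\widetilde{K(r)})\ge\sqrt{r^2-|K|^2}$ for $r>|K|$ (an argument of Rataj, cf.\ \cite[Lemma~2.2]{HLW04}), which already delivers the stated bound.

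The genuine gap is in the curvature estimate. The assertion that the generalized principal curvatures of a set of reach $\ge\rho$ are bounded by $1/\rho$ is false: positive reach yields only the \emph{one-sided} bound $\kappa_i\ge -1/\rho$, while an upper bound would require the closure of the \emph{complement} to have positive reach as well. Here $K(r)$ is a finite union of $r$-balls and typically has ridges (already for $K$ consisting of two points) along which $\rea(K(r))=0$; at such points the generalized curvatures of $\widetilde{K(r)}$ equal $+\infty$ and $\nor\widetilde{K(r)}$ carries positive $\H^{d-1}$-mass over a set that is $\H^{d-1}$-null in $\partial K(r)$. Consequently $C_k(K(r),\cdot)$ has a singular part concentrated on the ridge set --- this is visible already for $C_0$ of two overlapping disks in the plane, where the two concave corners contribute the defect needed to make $\chi=1$ --- and your inequality
\[
C_k^{var}(K(r),\rd)\le c(d,k)\,\rea(\widetilde{K(r)})^{-(d-1-k)}\,\H^{d-1}(\partial K(r))
\]
fails. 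The star-shaped/radial-graph bound on $\H^{d-1}(\partial K(r))$ is correct but does not control the mass of the normal cycle.

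The paper circumvents this by bounding $\H^{d-1}(\nor\widetilde{K(r)})$ directly. After rescaling to $|K|\le R^{-1}$ and $r=1$ it uses the metric projection $\Pi_{\widetilde{K(1)}}$ to build a bi-Lipschitz map $f_K:\nor B(0,1-s)\to\nor\widetilde{K(1)}$ whose Lipschitz constant is controlled uniformly in $K$ via Federer's estimates \cite[Theorem~4.8]{Fe59}; then the transformation formula for rectifiable currents gives
\[
C_k^{var}(K(1),\rd)\le(\Lip f_K)^{d-1}\,\|N_{B(0,1-s)}\|\,\|\varphi_k\|,
\]
which replaces the missing two-sided curvature bound in your scheme.
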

(It is well-known that for compact convex sets K these properties hold for all $R>0$, the last one in the sharper version of an isodiametric inequality with an optimal constant.)
\begin{proof}
According to an argument of Rataj (cf. Lemma 2.2 in Hug, Last and Weil \cite{HLW04}) we have for $r>|K|$, $$\rea\big(\widetilde{K(r)}\big)\ge\sqrt{r^2-|K|^2}\, ,$$ which implies the first assertion.\\
By the scaling property of the curvature measures we get for any $r>0$,
$$C_k^{var}(K(r),\rd)=r^k\, C_k^{var}((r^{-1}K)(1),\rd)\, .$$
Hence, it suffices to show for $R>\sqrt{2}$ the inequality
\begin{\eq}
\sup_{|K|\le R^{-1}} C_k^{var}(K(1),\rd)\le c_k(R)
\end{\eq}
and that for such $K$ the parallel set $K(1)$ has a Lipschitz boundary. For, we fix an arbitrary $0<s<\sqrt{1-R^{-2}}-R^{-1}$.  Since the curvature measures are translation invariant, we may assume that $0\in K$. Then we obtain for closed balls $B(0,r)$ with center $0$ and radius $r$ the set inclusions
$$B(0,1-s)\subset K(1)\subset B(0,1+|K|)\subset B(0,1+R^{-1})$$
and from this the following estimates for the Euclidean distance function $d(\cdot,\cdot)$.\\
$$s\le\inf_{x\in\partial B(0,1-s)}d\big(x,\widetilde{K(1)}\big)\, .$$
(If $d\big(x,\widetilde{K(1)}\big)<s$ for some $x\in\partial B(0,1-s)$, we get $d\big(0,\widetilde{K(1)}\big)\le d(0,x)+d\big(x,\widetilde{K(1)}\big)<1-s+s=1$ which  is a contradiction, since $0\in K$.) Furthermore, $d\big(x,\widetilde{K(1)}\big)\le d(x,\partial B(0,1+R^{-1}))\le 1+R^{-1}-(1-s)=R^{-1}+s$,
for any $x\in\partial B(0,1-s)$, which implies
$$\sup_{x\in\partial B(0,1-s)}d\big(x,\widetilde{K(1)}\big)\le R^{-1}+s<\sqrt{1-R^{-2}}\le\rea\big(\widetilde{K(1)}\big)\, .$$
Therefore we can define a biunique mapping from the unit normal bundle of the ball $B(0,1-s)$ onto that of the complementary set $\widetilde{K(1)}$ as follows:
$f_K: \nor B(0,1-s)\rightarrow\nor \widetilde{K(1)}$ with
$$f_K\bigg(x,\frac{x}{|x|}\bigg):=\bigg(\Pi_{\widetilde{K(1)}}x,\frac{x-\Pi_{\widetilde{K(1)}}x}{| x-\Pi_{\widetilde{K(1)}}x|}\bigg).$$
$f_K$ is a Lipschitz mapping whose Lipschitz constant is uniformly bounded in $K$ as above by some constant $c(R,s)$  depending only on  $R$ and $s$. This follows from Theorem 4.8 in \cite{Fe59}. (Federer's value $r$ is our $R^{-1}+s$, his $q$ is our $\sqrt{1-R^{-2}}$, and $s$ has the same meaning as there.) In particular, $K(1)$ has a Lipschitz boundary.\\
The transformation formula for rectifiable currents with compact support under Lipschitz mappings (cf. Federer \cite[4.1.30]{Fe69}) implies for the unit normal cycles
$$N_{\widetilde{K(1)}}(\varphi)=\int_{\nor\widetilde{K(1)}}\varphi=\int_{\nor B(0,1-s)}(f_K)^\sharp\varphi=N_{B(0,1-s)}\big((f_K)^\sharp\varphi\big)$$
for any smooth differential $(d-1)$-form $\varphi$ on $\rd\times\rd$, where $(f_K)^\sharp\varphi$ denotes its pullback form under the mapping $f_K$ (in the sense of a.e. differentiation).\\
Recall that for any set $X$ of positive reach and any bounded Borel set $B\subset\rd$ we have
$$C_k(X,B)=N_X\llcorner \1_{B\times\rd}(\varphi_k)$$
for the $k$th Lipschitz-Killing curvature form $\varphi_k$, $k=0,\ldots,d-1$. This leads to the estimates
\begin{\ea*}
C_k^{var}(K(1),\rd) &=&C_k^{var}\big(\widetilde{K(1)},\rd\big)\\
&\le&(\Lip(f_K))^{d-1}||N_{\nor B(0,1-s)}||\, ||\varphi_k\llcorner (B(0,1+R^{-1})\times\rd)||\\&\le& c(R,s)^{d-1}||N_{\nor B(0,1-s)}||\, ||\varphi_k\llcorner (B(0,1+R^{-1})\times\rd)||
\end{\ea*}
for the mass norm of rectifiable currents and the comass norm of differential forms restricted to $B(0,1+R^{-1})\times\rd$. (For notations and details on current theory see Federer \cite[Chapter 4]{Fe69}.) The minimum over $s$ as above on the right hand side (but also the expression for fixed $s$) provides a desired upper bound $c_k(R)$ if $k\le d-1$. The case $k=d$ is trivial.
\end{proof}

\bibliographystyle{amsplain}


\end{document}